\theoremstyle{plain}
   \def\MR#1{}
\newtheorem{thm}{Theorem}[section]
\newtheorem{dfn}[thm]{Definition}
\newtheorem{lemma}[thm]{Lemma}
\newtheorem{prop}[thm]{Proposition}
\newtheorem{cor}[thm]{Corollary}
\newtheorem{example}[thm]{Example}
\theoremstyle{remark}
\newtheorem{remark}[thm]{Remark}
\newcommand{\mb}{\mathbb}
\newcommand{\mc}{\mathcal}
\newcommand{\C}{\mb C}
\newcommand{\Pj}{\mb P}
\newcommand{\F}{\mc F}
\newcommand{\G}{\mc G}
\newcommand{\germe}{\mathcal{O}}
\newcommand{\til}{\widetilde}
\newcommand{\D}{\mathbb{D}}
\newcommand\restr[2]{{
  \left.\kern-\nulldelimiterspace 
  #1 
  \vphantom{\big|} 
  \right|_{#2} 
  }}
\DeclareMathOperator{\Aut}{Aut}
\DeclareMathOperator{\Div}{Div}
\DeclareMathOperator{\kod}{kod}
\DeclareMathOperator{\HomSheaf}{\mathscr{H}\text{\kern -3pt {\calligra\large om}}\,}
\newcommand{\ie}{{\it{i.e. }}}
\numberwithin{equation}{section}
\numberwithin{equation}{section}       
\begin{document}
\title{Transversely projective structures on smooth foliations on surfaces}

\author[G. Fazoli]{Gabriel Fazoli}
\address{IMPA, Estrada Dona  Castorina, 110 -- 22460-320, Rio de Janeiro, RJ, Brazil}
\email{gabrielfazoli@gmail.com}

\author[C. Melo]{Caio Melo}
\address{IMPA, Estrada Dona  Castorina, 110 -- 22460-320, Rio de Janeiro, RJ, Brazil}
\email{caio8.melo@gmail.com}

\author[J. V. Pereira]{Jorge Vitório Pereira}
\address{IMPA, Estrada Dona  Castorina, 110 -- 22460-320, Rio de Janeiro, RJ, Brazil}
\email{jvp@impa.br}

\keywords{Holomorphic foliation; transversely projective structure; fibration}

\subjclass[2020]{32M25, 37F75, 53C12}

\begin{abstract}
    Brunella's classification implies that every smooth foliation on a compact complex surface admits a singular transversely projective structure. However, Biswas and Dumitrescu's recent work shows that certain foliations on compact complex surfaces, despite possessing a singular transversely projective structure, do not admit a regular transversely projective structure. Here, we describe the smooth foliations on compact complex surfaces which fail to possess a regular transversely projective structure.
\end{abstract}

\maketitle
\date{}

\section{Introduction}

In \cite{biswas2024holomorphic}, Biswas and Dumitrescu employed a dimension counting argument to establish that a sufficiently general smooth foliation on the product of elliptic curves does not admit a transversely projective structure. Inspired by their work, our pourpose here is to describe all smooth holomorphic foliations on compact complex surfaces which do not admit a transversely projective structure.

Naturally, we build on Brunella's classification of smooth foliations on compact complex surfaces.
According to \cite{zbMATH01088813}, if $X$ is a compact K\"ahler surface and $\F$ is a smooth foliation on it then
$\F$ is a fibration, or $\F$ is transverse to the fibers of a $\mathbb{P}^1$-bundle, \ie $\F$ is a Riccati foliation,
or $\F$ is transverse to the general fiber of an elliptic fibration, \ie $\F$ is a turbulent foliation, or
$\F$ is a linear foliation on a complex torus, or  $\F$ is a  transversely hyperbolic foliation with dense leaves.
Moreover, if $X$ is a non-K\"ahler compact surface carrying a smooth foliation $\F$ then $X$ is a Hopf surface or an Inoue surface
and $\F$ is any foliation on them. 

A careful look at Brunella's list just present reveals that besides  fibrations, turbulent foliation, and foliations on Hopf surfaces, every other smooth foliation on a compact complex surface admit a natural transversely projective structure.

As already pointed out by Biswas and Dumitrescu, turbulent foliations are not necessarily transversely projective. In Section \ref{S:turbulent} below, we characterize which turbulent foliations admit transverse projective structures, see for instance Theorem \ref{T: turbulent is transversely projective if and only if (A) or (M)}. 

Curiously, there are fibrations on Hopf surfaces which do not admit a transversely projective structure. We show that this phenomenon does not happen when the surface is K\"ahler and compact, see Theorem \ref{T: fibration in Kahler is transversely projective}. We also obtain an analogue statement for fibrations on projective manifolds of dimension three, see Theorem \ref{T:fibration on 3folds} for a precise statement.  Although the general foliation on a Hopf surface is not turbulent, as it turns out, their analysis is very similar to the analysis of turbulent foliations as explained in Section \ref{S:Hopf}.

\subsection*{Acknowledgements} We are indebt to Sorin Dumitrescu for bringing the problem addressed in this article to our attention and for correspondence regarding it. Furthermore, the authors express their appreciation for the financial support provided by:  CAPES-COFECUB program (project number: Ma 1017/24, Géométrie des équations différentielles et des variétés algébriques), funded by the French Ministry for Europe and Foreign Affairs, the French Ministry for Higher Education and Research, and CAPES; and CNPq Projeto Universal 408687/2023-1 "Geometria das Equações Diferenciais Algébricas". Pereira  acknowledges the support  from CNPq (Grant number 301683/2019-0), and FAPERJ (Grant number E26/200.550/2023). Fazoli acknowledges the support of FAPERJ (Grant number E26/201.353/2023). Melo acknowledges the support of CAPES-PROEX.

\section{Transversely projective foliations}

\subsection{Foliations} A smooth holomorphic foliation $\F$ on a complex manifold $X$ is determined by an involutive locally free subsheaf $T_{\F}$ of $T_X$ (the tangent sheaf/bundle of $\F$) with cokernel $T_X/T_{\F}$ also locally free. The cokernel $T_X/T_{\F}$ will be denoted by $N_{\F}$ and is the normal bundle of $\F$. The rank of $T_{\F}$ is the dimension of the foliation, while the rank of $N_{\F}$ is the codimension of the foliation. The dual of $N_{\F}$ is the conormal bundle of $\F$ and will be denoted by $N^*_{\F}$, and the dual of $T_{\F}$ is the cotangent bundle of $\F$ and will be denoted by $\Omega^1_{\F}$.

\subsection{Transversely projective structures} We recall one of the many equivalent definitions of transversely projective foliations, see for instance \cite{LORAY_2007}.

\begin{dfn}\label{D: definition by the bundle}
    A codimension one foliation $\F$ on a complex manifold $X$ is a (regular) transversely projective foliation if there exists
    \begin{enumerate}
        \item a $\Pj^1$-bundle $\pi: P \rightarrow X$  over $X$;
        \item a codimension one smooth foliation $\G$ on $P$ everywhere transverse to the fibers of  $\pi$; and
        \item a holomorphic section $\sigma: X \rightarrow P$ transverse to $\G$ at every point, such that $\F = \sigma^* \G$. 
    \end{enumerate}
    The datum $\mathcal{P} = (\pi , \G , \sigma)$ is called a (smooth) transversely projective structure of $\F$.
\end{dfn}

Two transversely projective structures $\mathcal P = (\pi:P \to X,\G,\sigma)$ and $\mathcal P' = (\pi':P'\to X,\G',\sigma')$ for the same
foliation $\F$ are equivalent if there exists an invertible morphism  $\Phi: P \to P'$ such that $\pi'\circ \Phi= \mathrm{Id}$, $\Phi^* \G' = \G$ and $\Phi^* \sigma' = \sigma$.

If $X$ is  a curve and $\F$ is the foliation with $T_{\F} = 0$ and $N_{\F} = T_X$ then a transversely projective structure for
$\F$ is nothing but a projective structure for the Riemann surface $X$ as explained in  \cite[Section 1.5]{loray-martin-09-MR2647972}.

The same foliation may admit several non-equivalent projective structures.

\begin{example}[Linear foliations on complex tori]\label{E:tori} If $X$ be a compact complex torus and $\F$ be a codimension one foliation defined by a non-zero section $\omega \in H^0(X,\Omega^1_X)$. For any $\lambda \in \mathbb C$, we may consider the foliation $\mathcal G_{\lambda}$ on the trivial $\mathbb P^1$-bundle $P= X \times \mathbb P^1$ defined by the 
    meromorphic $1$-form $d \log z - \omega + z \lambda \omega$, where $z$ is a projective coordinate  on $\mathbb P^1$. If $\sigma : X \to P$ is the section $\sigma(x) = (x, 1)$  then $\F = \sigma^* \mathcal G_\lambda$. In this way, we obtain a family of transversely projective structures $(P, \mathcal G_{\lambda}, \sigma)$ for $\F$ parameterized by $\lambda \in \C$. Moreover, if $\lambda \neq \lambda'$ then the corresponding transversely projective structures are non-equivalent.
\end{example}

The set of projective structures on a Riemann surface $X$ is parameterized by the affine space $H^0(X, (\Omega^1_X)^{\otimes 2})$, see \cite{loray-martin-09-MR2647972}. We recall below  an analogue for transversely projective foliations of this classical fact.

\begin{prop}\label{P:estruturas}
    Let $\F$ be a smooth codimension one foliation on a complex manifold $X$. The equivalence classes of transversely projective structures
    of $\F$ is parameterized by the kernel of morphism
    \[
        H^0(X, (N^*_{\F})^{\otimes 2}) \to  H^0(X,  (N^*_{\F})^{\otimes 2}  \otimes \Omega^1_{\F} )
    \]
    induced by Bott's partial connection. In particular, if $h^0(X,(N^*_{\F})^{ \otimes 2})=0$ then $\F$ admits at most one transversely projective structure.
\end{prop}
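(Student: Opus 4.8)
The plan is to reduce the classification of transversely projective structures on $\F$ to a computation with $\Pj^1$-bundles and their foliations, and then identify the relevant space with (a subspace of) $H^0(X,(N^*_{\F})^{\otimes 2})$ via Bott's partial connection. First I would fix a transversely projective structure $\mathcal P_0 = (\pi_0 : P_0 \to X, \G_0, \sigma_0)$ as a basepoint for the affine space we want to describe (if none exists there is nothing to prove). The section $\sigma_0$ together with the foliation $\G_0$ gives, locally, a development of $\F$: over a sufficiently fine cover $\{U_i\}$ of $X$ we obtain submersions $f_i \colon U_i \to \Pj^1$ with $f_i = (\text{M\"obius}) \circ f_j$ on overlaps, and $\F|_{U_i}$ is the foliation defined by $df_i$. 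Any other transversely projective structure for $\F$ is, up to equivalence, obtained by post-composing a new $\Pj^1$-bundle with foliation transverse to the fibers; the key point is that such a structure is encoded by a family of Riccati-type data measuring how the new development differs from the reference one.

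Next I would make the bundle $P_0$ explicit: after a local trivialization $P_0|_{U_i} \cong U_i \times \Pj^1$ adapted to $\sigma_0$ (so that $\sigma_0$ is the zero section, say), the transverse foliation $\G_0$ is defined by a $1$-form $dz - \alpha_i$ where $\alpha_i$ is an $\germe_X(U_i)$-valued meromorphic (in $z$) $1$-form of the shape $\alpha_i = a_i - 2 b_i z + c_i z^2$ with $a_i, b_i, c_i \in \Omega^1_X(U_i)$, the Riccati form of a projective structure (this is the standard normal form; see \cite{LORAY_2007}). The transversality of $\sigma_0 = \{z=0\}$ to $\G_0$ forces $a_i$ to be a nowhere-vanishing defining $1$-form for $\F|_{U_i}$, i.e. $a_i$ generates $N^*_{\F}$. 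Two such structures $(a_i,b_i,c_i)$ and $(a_i,b_i',c_i')$ for the \emph{same} $\F$ (with the same $a_i$, after normalizing the section and bundle) differ by a global object: the difference $c_i' a_i - c_i a_i$ glues to a section of $(N^*_{\F})^{\otimes 2}$, while the integrability condition $da_i = 2 b_i \wedge a_i$, $db_i = a_i \wedge c_i$ forces the difference in the $c$'s to be closed along $\F$ — precisely, to lie in the kernel of the map induced by the Bott partial connection $\nabla$ on $N^*_{\F}$ extended to $(N^*_{\F})^{\otimes 2}$, namely $H^0(X,(N^*_{\F})^{\otimes 2}) \to H^0(X, (N^*_{\F})^{\otimes 2}\otimes \Omega^1_{\F})$. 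This is the content I would unwind carefully.

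So the argument runs: (1) show the set of transversely projective structures, if nonempty, is a torsor under $H^0$ of the kernel described, by explicitly adding a section $s \in \ker$ to the reference Riccati data $c_i \mapsto c_i + s|_{U_i}/a_i^{\otimes 2}$ and checking integrability is preserved iff $\nabla s = 0$; (2) show this is well-defined (independent of trivializations and of the choice of normal forms) and injective on equivalence classes, using that an equivalence $\Phi$ of structures fixing $\pi$ and $\sigma$ and $\F$ must be the identity on the section and hence, being fiberwise M\"obius and transverse, forces $s = 0$; (3) conclude the parameter space is $\ker\big(H^0(X,(N^*_{\F})^{\otimes 2}) \to H^0(X,(N^*_{\F})^{\otimes 2}\otimes\Omega^1_{\F})\big)$, and that when $h^0(X,(N^*_{\F})^{\otimes 2}) = 0$ this kernel is zero, giving at most one structure.

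The main obstacle I anticipate is step (2): carefully setting up the correspondence so it is manifestly independent of all the local choices (trivialization of $P_0$, the local submersions $f_i$, the normalization putting $\sigma_0$ at $z=0$), and verifying that an equivalence of transversely projective structures induces the trivial modification. Concretely one must check that two different reference structures $\mathcal P_0, \mathcal P_0'$ for $\F$ give the \emph{same} affine structure on the set of all structures — i.e. the difference cocycles are computed consistently — which amounts to a cocycle/coboundary bookkeeping with the transition functions of $N^*_{\F}$ and the Bott connection; the nondegeneracy input is exactly that $a_i$ is a local generator of $N^*_{\F}$ and that $\G$ being a genuine (nonsingular) foliation transverse to $\sigma$ rigidifies the $z^0$ and $z^1$ coefficients, leaving only the $z^2$ coefficient — a quadratic differential along the normal direction — as the modulus.
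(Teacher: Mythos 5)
The paper does not actually prove Proposition~\ref{P:estruturas}; it defers to \cite{biswas2024holomorphic}, and the argument there is essentially the one you outline: put both structures in local Riccati normal form sharing the $z^0$ and $z^1$ coefficients, and identify the residual difference of the $z^2$ coefficients with a Bott-flat section of $(N^*_{\F})^{\otimes 2}$. Your sketch is correct; the one point to state carefully is that the condition extracted from integrability is flatness of the \emph{tensor} $F\,a_i^{\otimes 2}$ for the Bott connection determined by $da_i=2b_i\wedge a_i$ (so the twist by $b_i$ enters, and the condition is not leafwise constancy of $F$ itself), which is what your ``precisely, to lie in the kernel'' clause already asserts, and the normalization step you flag as the main obstacle is indeed where the two gauge freedoms fixing the section must be used together with the first two structure equations.
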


We refer to \cite{biswas2024holomorphic} for a proof of Proposition \ref{P:estruturas}.

\subsection{Distinguished local first integrals}
Alternatively, a transversely projective structure for a foliation $\F$ can be defined in terms of distinguished local first integrals.

\begin{dfn}\label{D: transversal projective structure}
    Let $X$ be a complex manifold. Let $\F$ be a smooth foliation on $X$. A transversely projective structure for $\F$ is a collection
    $\{f_i:U_i \rightarrow V_i \subset \mathbb{P}^1\}_{i \in I}$ of holomorphic submersions $f_i$ such that
    \begin{enumerate}
        \item the collection of domains $U_i$ forms an open cover of $X$;
        \item for every $i \in I$, the submersion $f_i$ define $\restr{\F}{U_i}$, that is
        \[
            \restr{T_{\F}}{U_i} = \ker df_i : \restr{T_{X}}{U_i} \to f_i^* T_{\mathbb P^1} ; \text{and}
        \]
        \item whenever $U_i \cap U_j \neq \emptyset$, there exists $f_{ij}\in \mathrm{Aut}(\mathbb{P}^1)$ such that
        \[
            \restr{f_i}{U_i \cap U_j} = f_{ij} \circ \restr{f_j}{U_i \cap U_j}\, .
        \]
    \end{enumerate}
\end{dfn}

Definitions \ref{D: definition by the bundle} and \ref{D: transversal projective structure} are equivalent. To pass from the first to the second, one starts with a sufficiently fine open covering $\{ U_i\}$ of $X$, consider $\Pj^1$-valued submersions $g_i : \pi^{-1}(U_i) \to \Pj^1$ which are first integrals for $\restr{\G}{\pi^{-1}(U_i)}$ (they exist and any two differ by left composition by an element of $\Aut(\Pj^1)$, because of the transversality between $\G$ and $\pi$), and consider the first integrals for $\restr{\F}{U_i}$ defined by $g_i \circ \sigma$. Reciprocally, to pass from Definition \ref{D: transversal projective structure} to Definition \ref{D: definition by the bundle} one constructs the $\Pj^1$-bundle $P$ by gluing $U_i\times \Pj^1$ with $U_j\times \Pj^1$ using the transition function $f_{ij}$ and interpret the first integral $f_i$ as a section of the $\Pj^1$-bundle. For further details on the equivalence of the two definitions, the reader can consult \cite{zbMATH00050350}.

\subsection{Canonical projective structures on Riemann surfaces}
Specializing Definition \ref{D: transversal projective structure} to codimension one foliations on curves (that is, curves), one recovers the definition of a projective structure on a Riemann surface in terms of an atlas with transition functions taking values in $\Aut(\Pj^1)$.

\begin{example}[Canonical projective structure on a Riemann surface] \label{Ex: Canonical projective structure} 
    Every Riemann surface $X$ admits a projective structure induced by the uniformization theorem. Moreover, such projective structure is invariant by the automorphism group of $X$.
\end{example}
\begin{proof}
    Let $X$ be a Riemann surface and $\pi: \Tilde{X} \rightarrow X$ its universal cover. According to the uniformization theorem, $\Tilde{X}$ is either $\mathbb{P}^1$, $\mathbb{C}$ or $\D$. Let $\iota: \Tilde{C} \hookrightarrow \Pj^1$ be the natural inclusion. By considering all local sections of $\pi$ and composing with $\iota$, we obtain a collection of embeddings $f_i: U_i \rightarrow \Pj^1$. The collection $\{f_i : U_i \to f_i(U_i) \subset \mathbb P^1\}$ is a projective structure for $X$. The invariance by the automorphism group of $X$ is clear since $\Aut(\D)$ and $\Aut(\C)$ are subgroups of $\Aut(\Pj^1)$.
\end{proof}

Following \cite{loray-martin-09-MR2647972}, we will call the projective structure given by Example \ref{Ex: Canonical projective structure}, the canonical projective structure of $X$. Example \ref{Ex: Canonical projective structure} has two natural counterparts for codimension one foliations.

\begin{example}[Smooth fibrations]
    Let $B$ be a Riemann surface. Let $\F$ be a foliation defined by a smooth fibration $f :X \to B$, that is, every fiber of $f$ is smooth and none of them is multiple. Then, every projective structure for $B$ induces a smooth transversely projective structure for $\F$, given as the pullback of the projective atlas. Since, by  Example \ref{Ex: Canonical projective structure}, every Riemann surface admits a smooth projective structure, then $\F$ is transversely projective.
\end{example}

\begin{example}[Foliations transverse to smooth fibrations]\label{E:transverse to fibrations}
    Let $\F$ be a smooth codimension one foliation on a complex manifold $X$. Assume $X$ is endowed with a proper smooth fibration $\pi:X \to B$ with one-dimensional fibers such that $\F$ is everywhere transverse to the fibers of $\pi$. Then $\F$ is transversely projective.
\end{example} 
\begin{proof}
    The transversality between $\F$ and $\pi$ impose strong restrictions on the foliation $\F$ and on the fibration $\pi$ as explained in \cite[Chapter 5]{zbMATH03907374} which we summarize. Lifting paths on the basis $B$ along leaves of $\F$ one verifies  that any two fibers of $\pi$ are isomorphic. Let us choose one of them, say $F_b = \pi^{-1}(b)$.
    Moreover, if $\Tilde B$ is the universal covering of $B$ and $\Tilde X$ is the fiber product $X \times_{B} \Tilde B$ then $\Tilde X$ is biholomorphic to the product $\Tilde B \times F_b$ in such a way that $\Tilde \F$, the pull-back of $\F$  to $\Tilde X$, is the foliation defined the projection on the second the factor. The original manifold $X$ can be recovered as a quotient of $\Tilde X$ by  $\pi_1(B,b)$ acting on $\Tilde B \times F$ through deck transformations on the first factor and through a representation $\rho : \pi_1(B,b) \to \Aut(F_b)$.

    The canonical projective structure on $F_b$ defines a projective structure on $\Tilde \F$. Since the canonical projective structure on $F_b$ is invariant by automorphisms, the projective structure for $\Tilde \F$ is invariant by the action of $\pi_1(B,b)$ and therefore descends to a projective structure on $\F$.
\end{proof}

\subsection{Monodromy representation} Given a transversely projective structure $( \pi:P \to X, \G, \sigma)$  for a foliation $\F$ on a complex manifold $X$ and a point $x \in X$, the lifting of paths on $X$ to the leaves of the foliation $\G$ defines  an anti-representation
\[
    \rho : \pi_1(X,x) \to \Aut(\mathbb P^1) \, ,
\]
which completely determines the isomorphism class of the $\mathbb P^1$-bundle $P$. It can also be obtained through the analytic continuation
of the distinguished first integrals for the projective structure.

If one considers the étale covering $p : Y \to X$ of $X$ determined by $\ker \rho$, one obtains that $p^* \F$, the pull-back of $\F$ to $Y$,
is defined by a submersion $f:Y \to \mathbb P^1$ which is covariant for the action of $\pi_1(X,x)/\ker \rho = \rho(\pi_1(X,x))$ on $Y$ that is
\[
    f(\gamma \cdot y ) = \rho(\gamma)(f(y))
\]
for every $\gamma \in \pi_1(X,x)/\ker \rho$.

\subsection{Singular projective structures} Although our focus is on smooth transversely projective structures, it seems appropriate
to recall the definition of singular transversely projective structures.

\begin{dfn}\label{D:singular structures}
    A codimension one foliation $\F$ on a complex manifold $X$ is a singular transversely projective foliation if there exists
    \begin{enumerate}
        \item a $\Pj^1$-bundle $\pi: P \rightarrow X$  over $X$; and
        \item a codimension one foliation $\G$ on $P$ transverse to the generic fiber of  $\pi$; and
        \item a meromorphic section $\sigma: X \dashrightarrow P$ generically transverse to $\G$
    \end{enumerate}
    such that $\F = \sigma^* \G$. The datum $\mathcal{P} = (\pi , \G , \sigma)$ is called a singular transversely projective structure of $\F$.
\end{dfn}

It is well-known, and easy to check using Brunella's classification, that every smooth foliation on a compact complex surface admits a singular transversely projective structure. For more about singular transversely projective foliations/structures, we refer to \cite{Scardua1997} (the first work to consider the concept), \cite{LORAY_2007} for a detailed discussion of the above definition, and \cite{loray2016representations} for a description of the structure of codimension one singular transversely projective foliations on projective manifolds of arbitrary dimension.

\section{Fibrations}
Let $X$ be a complex manifold and let $B$ be a smooth curve. We say that a proper fibration $f : X \to B$ is quasi-smooth if the support of every fiber of $f$ is smooth. In other words, for every $b \in B$, the divisor  $f^*b \in \Div(X)$ is a positive multiple of a smooth and irreducible hypersurface $L_b$. Therefore we may write $f^* b = n_b L_b$ with $n_b \ge 1$. When $n_b >1$ then the fiber of $f$ over $b$ is a multiple fiber with multiplicity $n_b$.

\subsection{Orbifold  base}
It is convenient to associate to any quasi-smooth fibration the $\mathbb Q$-divisor
\[
    \Delta = \sum_{b \in B} \frac{n_b -  1}{n_b} \cdot b  \in \Div(B) \otimes \mathbb Q \, ,
\]
and consider the pair $(B,\Delta)$ as the orbifold base of the fibration in the sense of \cite[Section 1]{zbMATH02123576}.

Notice that $f^*\Delta \in \Div(X)$ and satisfies
\[
    f^* \Delta = \sum_{b \in B} (n_b -1) L_b  \, .
\]
A standard computation, which traces back to Darboux, shows that $N^*_{\F} = f^* \omega_B \otimes \mathcal O_X(f^* \Delta)$ (see \cite[Section 2.3, Example 5]{zbMATH02150908}).

An orbifold base $(B,\Delta)$ is called good (also called uniformizable) if there exists a curve $C$ and a discrete subgroup $G\subset \Aut(C)$ such that
$B \simeq C/G$ and the ramification divisor of the quotient morphism $\pi: C \to C/G \simeq B$ equals $\pi^*\Delta$. We will call the triple $(C,G,\pi)$ an unfolding (also called uniformization) of $(B,\Delta)$. Otherwise, the orbifold base $(B,\Delta)$ is called bad.

Most orbifolds are good as established by Bundgaard-Nielsen-Fox theorem stated below. For a brief account of its history, the reader can consult  \cite[Theorem 1.4]{uludag-07-MR2306159}. We also refer to \cite{zbMATH04046721} and \cite[Chapter 2]{zbMATH00558225} for more information about orbifolds.

\begin{thm}\label{T:BNF}
    If $B$ is a compact Riemann surface and $(B,\Delta)$ is a bad orbifold then $B \simeq \mathbb P^1$. Moreover, $\Delta = \frac{n-1}{n} p$, for $n \ge 2$ and $p \in \mathbb P^1$,  or  $\Delta=\frac{n-1}{n} p + \frac{m-1}{m} q$ for distinct integers $n, m\ge2$ and distinct points $p,q \in \mathbb P^1$.
\end{thm}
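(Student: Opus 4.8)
The plan is to prove the Bundgaard-Nielsen-Fox theorem (Theorem~\ref{T:BNF}) by analyzing the orbifold fundamental group and reducing the question of whether $(B,\Delta)$ is good to whether this group embeds into $\Aut(C)$ for some simply connected Riemann surface $C$. First I would recall that to an orbifold $(B,\Delta)$ with $\Delta = \sum \frac{n_b-1}{n_b}\cdot b$ one associates the orbifold fundamental group $\pi_1^{orb}(B,\Delta)$, which has a standard presentation: if $B$ has genus $g$ and $\Delta$ is supported on points $b_1,\dots,b_k$ with multiplicities $n_1,\dots,n_k$, then $\pi_1^{orb}$ is generated by $a_1,b_1,\dots,a_g,b_g,x_1,\dots,x_k$ subject to $x_i^{n_i}=1$ and $\prod [a_j,b_j]\prod x_i = 1$. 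The orbifold is good precisely when there is a surface $C$ with a properly discontinuous action of a group $G$ with $C/G=B$ and the right ramification; equivalently, when $\pi_1^{orb}$ acts on a simply connected Riemann surface $\tilde C$ (one of $\mathbb P^1$, $\mathbb C$, $\D$) with the prescribed stabilizers, so that $C$ is an intermediate cover.

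Next I would separate cases by the sign of the orbifold Euler characteristic $\chi^{orb} = 2 - 2g - \sum_i (1 - \frac{1}{n_i})$. When $\chi^{orb} \le 0$, I would invoke the classical uniformization of orbifolds: the group $\pi_1^{orb}$ acts on $\mathbb C$ (the Euclidean case, $\chi^{orb}=0$) or on $\D$ (the hyperbolic case, $\chi^{orb}<0$) as a discrete subgroup of isometries realizing exactly the required cone points, so $(B,\Delta)$ is good. This part is essentially Poincaré's construction via Fuchsian groups and fundamental polygons, and I would cite it rather than reproduce it. When $\chi^{orb}>0$, the underlying surface must be $\mathbb P^1$ (or a torus/higher genus surface, but those force $\chi^{orb}\le 0$ already, so only $g=0$ survives unless $k\le 1$ and $g=0$). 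So the list of spherical-type signatures to examine is: $()$, $(n)$, $(n,m)$, $(2,2,n)$, $(2,3,3)$, $(2,3,4)$, $(2,3,5)$. For every signature with at least three cone points — the dihedral $(2,2,n)$ and the platonic $(2,3,3),(2,3,4),(2,3,5)$ — the group $\pi_1^{orb}$ is finite (it is the corresponding spherical triangle group) and acts on $\mathbb P^1=S^2$ as a finite subgroup of $\mathrm{PSU}(2)\subset \Aut(\mathbb P^1)$, realizing exactly those cone points; hence these are good, with $C=\mathbb P^1$. That leaves only the signatures $(n)$ with one cone point and $(n,m)$ with two, and the empty signature (which is trivially good with $C=B=\mathbb P^1$).

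Then I would show that the two remaining families are genuinely bad. For the teardrop $(\mathbb P^1; \frac{n-1}{n}p)$ with $n\ge 2$: the putative $\pi_1^{orb}$ has presentation $\langle x \mid x^n = 1\rangle = \Z/n$, but an element of $\Aut(\mathbb P^1)$ of finite order $n$ fixing exactly one marked point is impossible — a nontrivial finite-order projective transformation of $\mathbb P^1$ has exactly two fixed points, so a cyclic group acting effectively produces two cone points of equal order, not one. Unwinding this: if $(C,G,\pi)$ were an unfolding, pick a point of $C$ over $p$; its stabilizer is cyclic of order $n$ and, as a finite subgroup of $\Aut(C)$ (with $C$ one of the three models, hence any finite subgroup conjugate into $\mathrm{PSU}(2)$), it would fix a second point, forcing a second branch point in $\Delta$ — contradiction. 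For the spindle $(\mathbb P^1; \frac{n-1}{n}p + \frac{m-1}{m}q)$ with $n\ne m$: here a Riemann-Hurwitz / character-theoretic count shows no branched cover $\pi:C\to\mathbb P^1$ branched only over $p,q$ with local orders $n$ and $m$ respectively can exist when $n\ne m$, since any connected cover of $\mathbb P^1$ branched over two points is cyclic (its monodromy group is generated by two conjugate permutations whose product is trivial, forcing a single transitive cyclic permutation), which forces equal ramification indices $n=m$ over $p$ and $q$; and passing to $C/G$ one sees the orbifold cannot be good. The main obstacle will be packaging the ``bad'' direction cleanly: one must argue not just that the naive spherical group fails to act, but that \emph{no} unfolding whatsoever exists, which is precisely the statement that a nontrivial cyclic cover of $\mathbb P^1$ branched over one or two points with unequal orders is impossible — this is where the topology of covers of the punctured sphere (the once-punctured sphere is simply connected; the twice-punctured sphere has abelian $\pi_1 \cong \Z$) does the real work, and I would make sure to phrase it in terms of the orbifold fundamental group surjecting onto the deck group rather than relying on any pre-existing geometric structure.
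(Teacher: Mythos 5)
The paper does not prove Theorem~\ref{T:BNF} at all: it is quoted as the classical Bundgaard--Nielsen--Fox theorem, with the reader referred to Uluda\u{g}'s survey and the standard orbifold references for its history and proof. So there is nothing in the paper to compare your argument against line by line; what you have written is the standard proof outline from the literature, and it is essentially sound. Your division into $\chi^{orb}\le 0$ (cited: realization of the signature by a Euclidean or Fuchsian group, which is the genuinely hard content of the theorem) and $\chi^{orb}>0$ (explicit finite subgroups of $\Aut(\mathbb P^1)$ for the signatures $(2,2,n)$, $(2,3,3)$, $(2,3,4)$, $(2,3,5)$) is the correct and complete case analysis, since $g\ge 1$ or $k\ge 4$ cone points force $\chi^{orb}\le 0$.

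Two small points to tighten. First, the theorem as stated is only the implication ``bad $\Rightarrow$ teardrop or unequal spindle,'' i.e.\ you must show every \emph{other} signature is good; your third paragraph announces that ``the two remaining families are genuinely bad,'' but the family $(n,m)$ contains the good case $n=m$ (realized by $z\mapsto e^{2\pi i/n}z$ on $\mathbb P^1$), which you leave implicit and should state explicitly, since it is part of what the theorem asserts. Conversely, the badness of the teardrop and of the unequal spindle is not actually required by the statement, though it is what justifies calling the list sharp. Second, in your teardrop discussion the first argument (a finite-order M\"obius transformation has two fixed points) tacitly assumes the unfolding $C$ is $\mathbb P^1$ or that the deck group is finite, whereas the paper's definition of an unfolding allows an arbitrary curve $C$ with an infinite discrete group; the covering-space argument you sketch at the end ($\mathbb P^1\setminus\{p\}$ is simply connected, $\pi_1(\mathbb P^1\setminus\{p,q\})\cong\Z$ makes the local monodromies at $p$ and $q$ inverse to each other, hence of equal order in any quotient) is the one that rules out \emph{all} unfoldings, so lead with that and drop the fixed-point version.
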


\subsection{Orbifold base of a transversely projective fibration is good}

\begin{prop}\label{P:fibrations vs structures}
    Let $f: X \to B$ be a quasi-smooth fibration and let $\F$ be the foliation defined by it. Then $\F$ admits a smooth transversely projective structure if, and only if, the orbifold base $(B,\Delta)$ is good.
\end{prop}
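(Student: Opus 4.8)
The plan is to prove both implications using the structural description of quasi-smooth fibrations and of transversely projective structures recalled in Section 2.

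\smallskip

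\noindent\textbf{Sufficiency (good $\Rightarrow$ transversely projective).} Suppose $(B,\Delta)$ is good, with unfolding $(C,G,\pi)$, so $B \simeq C/G$ and the ramification divisor of $\pi : C \to B$ is exactly $\pi^*\Delta = \sum_b (n_b - 1)\,\pi^{-1}(b)_{\mathrm{red}}$. First I would form the fiber product $X' = X \times_B C$ and argue that, after normalization, the induced fibration $X' \to C$ is a genuine smooth fibration: over a point $c \in \pi^{-1}(b)$ the fiber $f^*b = n_b L_b$ of $f$ is pulled back by a map ramified to order exactly $n_b$ at $c$, so the multiplicity is absorbed and $L_b$ appears with multiplicity one in the new fiber. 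Thus $X' \to C$ is a quasi-smooth fibration with trivial orbifold divisor, i.e.\ an honest smooth fibration, and by the Example on smooth fibrations it carries a smooth transversely projective structure; equivalently, after passing to the universal cover $\widetilde C$ one trivializes $\widetilde X = \widetilde C \times F$ and uses the canonical projective structure on the fiber $F$, exactly as in Example \ref{E:transverse to fibrations}. The point is then that this projective structure on the foliation pulled back to $X'$ is canonically built from the fiber and hence is invariant under the deck group $G = \pi_1$-type action, so it descends to a smooth transversely projective structure on $\F$ downstairs on $X$. The one thing to check carefully is that the descent is to a \emph{regular} (everywhere defined, everywhere transverse to the section) structure and not merely a singular one: this is where the hypothesis that the ramification of $\pi$ matches $\Delta$ \emph{exactly} (not just over a divisor supported on the right points) is used, since a mismatch would reintroduce multiple fibers and force the section to be tangent to $\G$ there.

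\smallskip

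\noindent\textbf{Necessity (transversely projective $\Rightarrow$ good).} Conversely, assume $\F$ carries a smooth transversely projective structure $(\pi_P : P \to X, \G, \sigma)$ with monodromy anti-representation $\rho : \pi_1(X) \to \Aut(\mathbb P^1)$. Since $\F$ is a fibration $f : X \to B$, every loop contained in a fiber is contractible in $X$ after we account for the fact that $f$ has connected fibers, so $\rho$ factors through $f_* : \pi_1(X) \to \pi_1^{\mathrm{orb}}(B,\Delta)$ — more precisely, the relevant invariant is the orbifold fundamental group because a small loop around a multiple fiber $L_b$ of multiplicity $n_b$ maps under $f$ to a loop winding $n_b$ times around $b$, and its image under $\rho$ must have order dividing $n_b$ (the first integral, which is constant along $L_b$, has a well-defined value there). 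Taking the covering $p : Y \to X$ associated with $\ker\rho$ as in the monodromy subsection, $p^*\F$ is defined by a submersion $Y \to \mathbb P^1$ equivariant for $\rho(\pi_1(X))$; pushing this down along the fiber direction I would get a submersion $C \to \mathbb P^1$ on a curve $C$ covering $B$, equivariant for a discrete group $G \subset \Aut(\mathbb P^1)$ acting on (the image of) $C$, with $C/G \simeq B$ and ramification prescribed exactly by the orders $n_b$, i.e.\ by $\Delta$. That is precisely an unfolding $(C,G,\pi)$ of $(B,\Delta)$, so the orbifold base is good. Concretely, one can also phrase this via Proposition \ref{P:estruturas}: the obstruction to regularity lives in the cohomology built from $N_{\F}^* = f^*\omega_B \otimes \mathcal O_X(f^*\Delta)$, and a global section of $(N_{\F}^*)^{\otimes 2}$ flat for Bott's connection amounts to a global section on the orbifold $(B,\Delta)$ of $(\omega_B(\Delta))^{\otimes 2}$, whose existence (for all the relevant bundle twists simultaneously) is equivalent to the existence of a developing map on an unfolding, hence to goodness.

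\smallskip

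\noindent\textbf{Main obstacle.} The delicate point is matching multiplicities exactly in both directions: showing that the ramification index of the unfolding at a point over $b$ is forced to be \emph{precisely} $n_b$ (not a proper multiple, which would still give a smooth fibration upstairs but a different, larger orbifold), and conversely that the monodromy around a multiple fiber has order \emph{exactly} $n_b$ rather than a proper divisor. For the latter one must rule out the degenerate situation where the local first integral of $\F$ near $L_b$, although $\F$ is smooth, descends through the $n_b$-fold cover in a way that loses information — this uses that $\sigma$ is transverse to $\G$ everywhere, including over $L_b$, forcing the local monodromy to be a genuine order-$n_b$ rotation. Bad orbifolds (classified in Theorem \ref{T:BNF}) are exactly the obstruction on the $B$-side, and the content of the proposition is that this local-to-global obstruction is the \emph{only} one. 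I would expect the sufficiency direction to be essentially the descent argument of Example \ref{E:transverse to fibrations} applied to the unfolding, while the necessity direction, and in particular the exact-multiplicity bookkeeping, is where the real work lies.
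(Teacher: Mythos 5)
Your overall architecture (unfold, pass to the fiber product, descend; conversely extract an unfolding from the monodromy) is the right one, and the sufficiency direction follows the paper's proof. But the justification there is confused at the key point. For the foliation defined by a fibration $\tilde f: X' \to C$, the leaves \emph{are} the fibers, so the transverse direction is the base direction: the transversely projective structure is obtained by composing $\tilde f$ with the charts of the canonical projective structure on the curve $C$ (Example \ref{Ex: Canonical projective structure}), not from anything ``built from the fiber $F$''. Your ``equivalently'' clause — trivializing $\widetilde X \simeq \widetilde C \times F$ and using the canonical projective structure on $F$ — is doubly wrong: a smooth fibration need not be isotrivial, so no such product decomposition exists in general, and Example \ref{E:transverse to fibrations} treats foliations \emph{transverse} to a fibration, the opposite situation. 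This matters for the descent step: the structure descends to $X$ because the canonical projective structure on $C$ is invariant under $\Aut(C)\supset G$; with the reason you give, the $G$-invariance is unsupported.

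In the necessity direction you take a genuinely different route from the paper, and it is where the gap lies. The paper applies Selberg's lemma to $\rho(\pi_1(X))\subset \mathrm{PSL}(2,\C)$ to produce a finite-index normal subgroup $\Gamma$ with $\rho(\Gamma)$ torsion-free; since the (finite, order-$n_b$) holonomy of each multiple fiber embeds into the monodromy, the pullback of $\F$ to the associated \emph{finite} Galois covering has trivial leaf holonomy, hence is a smooth fibration over a curve $C$, and $(C, \pi_1(X)/\Gamma, C\to B)$ is the desired unfolding. Your developing-map argument through $\ker\rho$ and $\pi_1^{\mathrm{orb}}(B,\Delta)$ can be made to work, but the sentence ``pushing this down along the fiber direction I would get a submersion $C\to\mathbb P^1$ on a curve $C$ covering $B$'' is precisely the content to be proved: you must show that the leaf space of $p^*\F$ on the possibly infinite covering $Y$ is a Hausdorff Riemann surface on which $\rho(\pi_1(X))$ acts properly discontinuously with ramification exactly $\Delta$, which requires verifying that the lifted leaves over the multiple fibers have trivial holonomy (via the injectivity of the order-$n_b$ local monodromy, which you do identify) and then invoking Reeb stability — none of which is in the sketch, and all of which Selberg's lemma lets the paper bypass. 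Finally, the aside invoking Proposition \ref{P:estruturas} is off the mark: that proposition parameterizes the set of transversely projective structures \emph{once one exists}; it is not an existence obstruction, and $H^0(X,(N^*_{\F})^{\otimes 2})$ does not detect goodness of the orbifold base.
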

\begin{proof}
    Assume first that the orbifold base of $f$ is good. Let $(C, G, \pi)$ be an unfolding of $(B,\Delta)$. The normalization of the fiber
    product $\Tilde X = X \times_B C$ comes endowed with a smooth fibration $\tilde f: \Tilde X \to C$. The canonical projective structure on
    $C$ induces a transversely projective structure on the foliation defined by $\tilde f$. Moreover, this transversely projective structure is invariant by the natural action of $G$ on $\Tilde X$. Therefore, it descends to a smooth projective structure for $\F$.

    Assume now that $\F$ admits a smooth transversely projective structure. Let $\rho : \pi_1(X) \to \Aut(\mathbb P^1)$ be the monodromy representation of the structure. Selberg's lemma implies the existence of a finite index normal subgroup $\Gamma \subset \pi_1(X)$ such that $\rho(\Gamma)$ is torsion free.
    Let $G = \pi_1(X)/\Gamma$ and consider the Galois covering $\pi: \Tilde X \to X$ with group $G$ determined by $\Gamma$. Since the holonomy representation of leaves factor through the monodromy representation of the projective structure, the foliation $\pi^* \F$ is defined by a smooth fibration $\tilde f : \Tilde X \to C$, for some curve $C$. Moreover, the group $G$ acts on $C$ in such a way that $C/G \simeq B$ and, $(C,G, C \to B)$ is an unfolding of $(B,\Delta)$. It follows that the orbifold base is good.
\end{proof}

\begin{example}[Quasi-smooth fibrations which are not transversely projective foliations] \label{Ex: Hopf}
    Let $\lambda \in \mathbb C^*$ be a complex number with modulus strictly smaller than one. Let
    $\mathbb Z$ act on $\tilde X = \C^2 - \{ (0,0) \}$ through $\gamma(x,y) = (\lambda^n x, \lambda^m y)$ where  $n, m$ are distinct positive integers.
    Let  $X  = \Tilde X / \mathbb Z$ be the associated Hopf surface. If $\gcd(n,m)=1$ then the meromorphic $f(x,y) = ( x^m : y^n)$ from $\tilde X$ to $\mathbb P^1$ defines a quasi-smooth fibration on $X$ with orbifold base
    \[
        \left(\mathbb P^1, \frac{m -1}{m} p + \frac{n-1}{n} q \right)
    \]
    where $ p =  (0:1)$ and $q = (1:0)$. Since this orbifold base is bad, the foliation on $X$ is not transversely projective.
\end{example}

\subsection{Quasi-smooth fibrations on compact K\"ahler surfaces}

\begin{thm}\label{T: fibration in Kahler is transversely projective}
    Let $X$ be a compact K\"ahler surface. If $f : X \to B$ is a quasi-smooth fibration then the orbifold base of $f$ is good.
\end{thm}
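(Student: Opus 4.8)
The plan is to reduce the claim to the Bundgaard--Nielsen--Fox theorem (Theorem \ref{T:BNF}) by ruling out that the orbifold base $(B,\Delta)$ of a quasi-smooth fibration on a compact Kähler surface can be one of the two bad orbifolds on $\mathbb P^1$. By Theorem \ref{T:BNF}, a bad $(B,\Delta)$ forces $B \simeq \mathbb P^1$ together with either $\Delta = \frac{n-1}{n}p$ (one multiple fiber) or $\Delta = \frac{n-1}{n}p + \frac{m-1}{m}q$ with $n \ne m$, $n,m \ge 2$ (two multiple fibers of distinct multiplicities). So the whole proof amounts to: \emph{a quasi-smooth fibration over $\mathbb P^1$ on a compact Kähler surface cannot have exactly one multiple fiber, nor exactly two multiple fibers of distinct multiplicities}. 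I would isolate this as the content to prove and attack it through the topology/Hodge theory of the total space.

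The key step is a monodromy-and-Euler-characteristic argument on the smooth part of the fibration. Write $f : X \to \mathbb P^1$, let $S \subset \mathbb P^1$ be the (finite) set of points carrying multiple fibers, let $F$ be a general fiber (a smooth curve of some genus $g$), and consider the restriction $f^\circ : X^\circ \to \mathbb P^1 \setminus S$, which is a smooth proper fiber bundle with fiber $F$, hence is classified by a monodromy representation $\pi_1(\mathbb P^1 \setminus S) \to \mathrm{MCG}(F)$. Around a multiple fiber $n_b L_b$ the local monodromy is of finite order dividing $n_b$ (this is the classical local structure of a quasi-smooth, i.e.\ "non-reduced but smooth-support", fiber: after a base change of degree $n_b$ totally ramified at $b$ the pulled-back fibration becomes smooth, so the monodromy is the $n_b$-th root of the trivial monodromy). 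If $S = \{p\}$, then $\pi_1(\mathbb P^1 \setminus \{p\})$ is trivial, so the global monodromy is trivial, the bundle $f^\circ$ is trivial, and after removing one fiber $X$ would be $F \times \mathbb C$ up to the multiple-fiber modification at $p$; one then checks this cannot be compactified Kähler-compatibly with a genuinely multiple fiber --- more cleanly, a base change $C \to \mathbb P^1$ of degree $n$ ramified only over $p$ does not exist (Riemann--Hurwitz: a degree $n$ cover of $\mathbb P^1$ branched at a single point with full ramification would have $2 - 2g_C = 2n - (n-1) = n+1 > 0$ only for $n = 1$), which already shows the orbifold $(\mathbb P^1, \frac{n-1}{n}p)$ is bad for \emph{every} surface, Kähler or not --- so this case is vacuous and needs no Kähler hypothesis. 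The real case is $S = \{p, q\}$ with multiplicities $n \ne m$.

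For the two-point case the plan is: $\pi_1(\mathbb P^1 \setminus \{p,q\}) \cong \mathbb Z$, generated by a loop $\gamma$ whose image under the monodromy has order dividing $n$ around $p$ and order dividing $m$ around $q$; but the small loops around $p$ and around $q$ are inverse to each other in $\pi_1(\mathbb P^1 \setminus \{p,q\})$, so a single element $\rho(\gamma) \in \mathrm{MCG}(F)$ must have order exactly $\mathrm{lcm}$-compatible with both $n$ and $m$ --- and in fact the finite order must equal both $n$ and $m$ after accounting for the local normal-form (the order of the local monodromy around a multiple fiber of multiplicity $n$ is exactly $n$, because the minimal base change making the fibration smooth near that fiber has degree exactly $n$). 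That forces $n = m$, contradicting $n \ne m$. Here is where I would be careful, and this is the step I expect to be the main obstacle: justifying that the local monodromy around a multiplicity-$n$ quasi-smooth fiber has order \emph{exactly} $n$ (not merely dividing $n$), which uses that $L_b$ is reduced and irreducible and that a base change of degree strictly less than $n$ cannot kill the multiplicity. Equivalently, one argues via the canonical bundle / adjunction: the multiple fiber contributes a rational $\frac{n-1}{n}$ to $N^*_\F = f^*\omega_B \otimes \mathcal O_X(f^*\Delta)$ (as recorded just before Theorem \ref{T:BNF}), and a strictly smaller base change would have to produce an honest line bundle pulled back from the base, forcing $n \mid \deg$ of that base change over $b$.

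Finally I would assemble: by the above, over $\mathbb P^1$ a quasi-smooth fibration has either no multiple fibers (orbifold trivial, hence good, unfold by the identity), or at least two multiple fibers with a common multiplicity (or $\ge 3$ multiple fibers), and in all such cases $(B,\Delta)$ avoids the two exceptional families of Theorem \ref{T:BNF}, so $(B,\Delta)$ is good; and if $B \not\simeq \mathbb P^1$ then $(B,\Delta)$ is automatically good by Theorem \ref{T:BNF}. The Kähler hypothesis enters only to guarantee that $X$ is not, say, a Hopf-type surface where the one-multiple-fiber example of the previous subsection lives --- i.e.\ it is used to exclude the phenomenon exhibited by the non-Kähler example right before the theorem; concretely, one invokes that on a compact Kähler surface a fibration $f : X \to \mathbb P^1$ with a section-free smooth locus and prescribed multiple fibers must have its orbifold Euler characteristic and the genus of $F$ compatible with the cover $C \to \mathbb P^1$ actually existing, and the only obstruction to existence of $C$ is exactly the BNF list. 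I would present the argument by first disposing of $B \ne \mathbb P^1$, then the $|S| \le 1$ case with Riemann--Hurwitz, then the $|S| = 2$ case with the monodromy order computation, flagging the "order exactly $n$" lemma as the technical heart.
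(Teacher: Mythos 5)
There is a genuine gap, and it sits exactly where you flagged it: the claim that the local monodromy around a multiplicity-$n$ fiber has order \emph{exactly} $n$. That statement is about the monodromy as a self-diffeomorphism of the fiber up to isotopy, and it fails precisely in the case that matters. The local model of a multiple fiber is the quotient of $\mathbb D \times F'$ by $(x,z)\mapsto(e^{2\pi i/n}x,\phi(z))$ with $\phi$ a \emph{fixed-point-free} automorphism of order $n$; when $F'$ is an elliptic curve, $\phi$ is a translation by an $n$-torsion point and is therefore isotopic to the identity, so the image of the local monodromy in $\mathrm{MCG}(F)$ (and on all of $H_*(F)$) can be trivial no matter what $n$ is. The Euler-characteristic computation (the paper's Lemma \ref{L:euler zero}, which you do not carry out but which follows from the very monodromy triviality you establish, via the Lefschetz trace formula applied to the free deck transformation $\phi$) shows that a bad orbifold base \emph{forces} the fibers to have $\chi_{top}=0$, i.e.\ to be elliptic on a surface. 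So the only case your two-point argument needs to handle is exactly the case where it breaks. The Hopf-surface example stated immediately before the theorem is a direct counterexample to your purely topological reasoning: it is a quasi-smooth fibration over $\mathbb P^1$ with two multiple fibers of distinct coprime multiplicities, satisfying every topological hypothesis you use. Any proof that does not invoke Kählerness in a substantive way must therefore fail, and yours invokes it only in a sentence that essentially restates the conclusion. (Your one-multiple-fiber case has a separate logical slip: showing via Riemann--Hurwitz that $(\mathbb P^1,\frac{n-1}{n}p)$ admits no unfolding only re-derives that this orbifold is bad, which is part of Theorem \ref{T:BNF}; it does not show that no Kähler surface carries such a fibration, which is what the theorem asserts. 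The case is not vacuous.)

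For comparison, the paper's proof uses the trivial-monodromy observation in the opposite direction: triviality of the monodromy on $R^1f_*\mathbb C$, combined with Deligne's invariant subspace theorem and the fact that $\iota^*:H^1(X,\mathbb C)\to H^1(L,\mathbb C)$ is a morphism of Hodge structures (this is where the Kähler hypothesis enters), yields a holomorphic $1$-form $\omega\in H^0(X,\Omega^1_X)$ restricting to a nowhere-vanishing form on the genus-one fiber. Pairing $[\omega\wedge\bar\omega]$ with the fiber class shows $\omega$ is nonvanishing on every fiber, so it defines a foliation $\mathcal G$ with $T_X=T_{\mathcal F}\oplus T_{\mathcal G}$, and Beauville's splitting theorem then exhibits the universal cover as a product over which $f$ becomes a genuine fibration, whence the orbifold base is good --- contradiction. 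If you want to repair your approach, you would need to replace the mapping-class-group obstruction by an argument that actually exploits $b_1(X)$ or the Hodge decomposition; the isotopy-triviality of translations on elliptic curves cannot be circumvented topologically.
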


Combining Proposition \ref{P:fibrations vs structures} with Theorem \ref{T: fibration in Kahler is transversely projective} we obtain the following statement which we record for later use.

\begin{cor}\label{C: fibration in Kahler is transversely projective}
    Let $X$ be a compact K\"ahler surface. If $f : X \to B$ is a quasi-smooth fibration then, after passing to an  étale covering of $X$, the fibration $f$ becomes a smooth fibration.
\end{cor}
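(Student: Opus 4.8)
The plan is to reduce the statement to the Bundgaard--Nielsen--Fox theorem (Theorem \ref{T:BNF}). By that result, the only way the orbifold base $(B,\Delta)$ of a quasi-smooth fibration $f:X\to B$ can be bad is if $B\simeq\mathbb P^1$ and $\Delta$ is supported on at most two points, with $\Delta = \frac{n-1}{n}p$ (one multiple fiber, $n\ge 2$) or $\Delta = \frac{n-1}{n}p+\frac{m-1}{m}q$ with $n\ne m$, $n,m\ge 2$ (two multiple fibers of distinct multiplicities). So it suffices to rule out these two configurations on a compact Kähler surface. First I would record that in either bad case $X$ is fibred over $\mathbb P^1$ by a quasi-smooth fibration, so by adjunction $N^*_\F = f^*\omega_{\mathbb P^1}\otimes\mathcal O_X(f^*\Delta)$, as noted in the excerpt; since $K_\F = -N^*_\F$ for a codimension-one foliation on a surface and $T_\F$ is the tangent to the fibres, this ties the numerical class of $N^*_\F$ to the fibre class $F$ and the multiple-fibre components $L_b$.

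The main step is a numerical/Hodge-theoretic contradiction. In the one-multiple-fibre case, write $f^*p = nL_p$ and $f^*p' = L_{p'}$ for a general point $p'$; then $nL_p \equiv F \equiv L_{p'}$ numerically (all fibres are numerically equivalent), so $L_{p'}\equiv nL_p$ with $n\ge 2$. I would then intersect with an ample (Kähler) class $H$: $H\cdot L_{p'} = n\,(H\cdot L_p)$, which is fine numerically, so the obstruction must come from somewhere sharper. The right tool is the theory of multiple fibres of fibrations on Kähler surfaces (as in Barth--Hulek--Peters--Van de Ven): a fibration $f:X\to B$ with $X$ Kähler and a multiple fibre of multiplicity $n$ forces constraints; in particular, for a fibration over $\mathbb P^1$ with only bad-orbifold configurations, one shows that the general fibre must have genus $0$ or $1$ (otherwise one contradicts the negativity of $K_X$-type arguments or the structure of $R^1f_*\mathcal O_X$), and then a direct analysis applies. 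For genus $\ge 1$ general fibre one uses that the local monodromy of the fibration around a multiple fibre of multiplicity $n$ has order $n$ acting on $H^1$ of the fibre; a fibration over $\mathbb P^1$ with one or two multiple fibres and no other singular fibres would give a representation of $\pi_1(\mathbb P^1\setminus\{p\})=1$ or $\pi_1(\mathbb P^1\setminus\{p,q\})=\mathbb Z$ into the mapping class group that cannot realise an $n$-torsion (resp. an $n$- and an $m$-torsion with $n\ne m$) monodromy — precisely the classical obstruction that makes these orbifolds bad. For genus $0$ general fibre, the fibration is (generically) a $\mathbb P^1$-bundle blown up along fibres, and quasi-smoothness forbids multiple fibres outright unless the total space is non-Kähler (the Hopf example in the excerpt), giving the contradiction.

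Concretely, I would organise the argument as follows. (1) Reduce to $B=\mathbb P^1$ with the two bad $\Delta$'s using Theorem \ref{T:BNF}. (2) Treat the general fibre genus $g$ in cases. For $g=0$: a quasi-smooth genus-$0$ fibration on a Kähler surface has no multiple fibres, because a multiple fibre of a $\mathbb P^1$-fibration would force a non-reduced member of a base-point-free or nef-and-big system in a way incompatible with the Kähler class (or invoke that such $X$ would be a deformation of a Hopf surface, hence non-Kähler). For $g\ge 1$: use the multiple-fibre monodromy. Form the étale cover trivialising the finite local monodromies (Selberg-type argument, exactly as in Proposition \ref{P:fibrations vs structures}); the upshot is that an unfolding $(C,G,\pi)$ with $C\to\mathbb P^1$ ramified only over $p$ (resp. $p,q$) to orders $n$ (resp. $n,m$) would have to exist, but Riemann--Hurwitz shows no such connected $C$ exists — a cover of $\mathbb P^1$ branched over one point, or over two points to different orders, is impossible. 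This is really a restatement of why $(B,\Delta)$ is bad, so the content is entirely in showing the fibration monodromy \emph{is} the orbifold fundamental group representation, which on a Kähler surface follows from Hodge theory: $R^1f_*\mathbb C$ underlies a polarised variation of Hodge structure on $\mathbb P^1\setminus(\text{critical values})$, the local monodromies at $p,q$ are quasi-unipotent of the orders dictated by the multiple fibres, and a nontrivial such VHS on $\mathbb P^1$ minus one or two points with these torsion monodromies does not exist. The hard part — and the step I expect to be the main obstacle — is precisely pinning down that the local monodromy of the Kähler fibration at a multiple fibre of multiplicity $n$ is exactly $n$-torsion (not merely quasi-unipotent of some order dividing $n$), which uses the semistable reduction theorem together with the Kähler hypothesis to exclude the parabolic (unipotent, infinite-order) alternative that genuinely occurs on non-Kähler surfaces like Hopf surfaces.
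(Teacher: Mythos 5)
Your plan has a genuine gap, and it sits exactly where the theorem has content: the genus-one case. You propose to derive the contradiction from monodromy, asserting that ``the local monodromy of the fibration around a multiple fibre of multiplicity $n$ has order $n$ acting on $H^1$ of the fibre'' and that no VHS on $\mathbb P^1$ minus one or two points can carry such torsion monodromies. For a multiple fibre with smooth reduction, Reeb stability exhibits the nearby fibre $L$ as an $n$-sheeted covering of $L_p$ and the local monodromy as a deck transformation; when $L$ is elliptic this deck transformation is a translation, which acts \emph{trivially} on $H^1(L)$. So in the genus-one case the local system $R^1f_*\C$ is trivial, there is no monodromy obstruction at all, and your ``hard part'' (excluding a unipotent alternative) is moot --- the monodromy on $H^1$ is trivial both on Kähler surfaces and on the Hopf-surface counterexamples, so no statement about the local system can separate the two. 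Your argument does work for general fibre of genus $\ge 2$ (where $\Aut(L)\hookrightarrow\Aut(H^1(L,\Z))$ makes the deck transformation visible in the local system, contradicting the triviality forced by $\pi_1(\C)=1$ or by coprimality of $n,m$), and the genus-$0$ case is easily excluded by adjunction ($K_X\cdot F=-2=n\,K_X\cdot L_p$ with $K_X\cdot L_p=-2$ forces $n=1$, no Kähler hypothesis needed); but genus one is precisely the case singled out by the Lefschetz fixed-point computation and realized on Hopf surfaces, and for it you have no argument.

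The paper closes this case by a global, not local, use of the Kähler hypothesis: Deligne's invariant cycle theorem plus the triviality of the monodromy give surjectivity of $H^1(X,\C)\to H^1(L,\C)$ as a morphism of Hodge structures, hence a holomorphic $1$-form $\omega$ on $X$ with $\iota^*\omega=\omega_L$ nowhere zero; pairing $[\omega\wedge\overline\omega]$ with the fibre class shows $\omega$ is nonvanishing on every fibre, so it defines a foliation $\G$ with $T_X=T_\F\oplus T_\G$, and Beauville's splitting theorem then makes the universal cover a product on which $\F$ lifts to a fibration over a factor --- which is exactly an unfolding, contradicting badness. This is the step your plan is missing (and it is exactly what fails on a Hopf surface, where $h^{1,0}=0$). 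Separately, note that the statement you were asked to prove is the Corollary, which also requires the observation that a good orbifold base yields an étale cover of $X$ on which $f$ becomes smooth, via the normalized fibre product with an unfolding $(C,G,\pi)$; your write-up does not address this last step, though it is routine.
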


The proof of Theorem \ref{T: fibration in Kahler is transversely projective} relies on a topological observation which holds for
compact complex manifolds of arbitrary dimension.

\begin{lemma} \label{L: Euler zero}
    Let $X$ be a compact complex manifold. If $f : X \to B$ is a quasi-smooth fibration with bad orbifold base then
    the topological Euler characteristic of the fibers and of $X$ is zero.
\end{lemma}
\begin{proof}
    Theorem \ref{T:BNF} implies that $B$ is equal to $\mathbb P^1$
    and $\Delta = \frac{n-1}{n} p$ or $\Delta = \frac{n-1}{n} p  + \frac{m-1}{m} q$ with $n \neq m$. Moreover, in the second case,
    after replacing $X$ by an étale covering of order $\gcd(n,m)$, we may assume that $\gcd(n,m)=1$.  As before, let us denote
    by $L_p$, $L_q$ the supports of the multiple fibers. The open subset of regular values of $f$ will be denoted by $B^{\circ}$ and
    its pre-image under $f$ by $X^{\circ}$.

    The support of a multiple fiber, say $L_p$, seen as leaf of the induced foliation, has finite holonomy and, by Reeb Local Stability Theorem,
    the smooth fibers of $f$ (which are all diffeomorphic to a general fiber $L$) are (topological) $n$-sheeted coverings of $L_p$. Therefore,
    \begin{equation} \label{E: euler characteristic}
        \chi_{top}(L) = n \chi_{top}(L_p) = m \chi_{top}(L_q).
    \end{equation}

    Consider the local systems over $B^0$ given by $R^\ell f_* \C$. If $B^{\circ} \simeq \C$ then the simply-connectedness of $\C$ implies that these local systems have trivial monodromy. The same holds true when $B^{\circ} \simeq \C^*$ but now because the order of the monodromy around $p$ divides $n$ and around $q$ divides $m$.

    Let $g: L \rightarrow L$ be a non-trivial deck transformation.  By Lefschetz trace formula  (see for instance \cite[Theorem 2C.3]{zbMATH02103273}),
    \[
         \sum_{i=0}^{2 \dim L} (-1)^i \mathrm{tr} \Big( g_*: H_i(L, \C) \rightarrow H_i(L, \C) \Big) = 0 .
    \]
    Observe that $g_*$ corresponds to the monodromy representation of a path $\gamma$ around $p$. Therefore, since the monodromy of the local systems $R^\ell f_* \C$ is trivial,  $g_* = \mathrm{Id}$. Hence,
    \[
    \sum_{i=0}^{2 \dim L} (-1)^i \mathrm{tr}\Big(g_*: H_i(L, \C) \rightarrow H_i(L, \C)\Big) = \sum_{i=0}^{2 \dim L} (-1)^i h_i(L, \C) = \chi_{top}(L)
    \]
    Hence, $\chi_{top}(L)=0$. Also by \ref{E: euler characteristic}, $\chi_{top}(L_p)= \chi_{top}(L_q)=0$. This shows that every fiber has
    topological Euler characteristic equal to zero, as claimed.

    To verify the  vanishing of the topological Euler characteristic of $X$, first note that  the additivity of the topological Euler characteristic implies
    \[
        \chi_{top}(X)=\chi_{top}(X^{\circ}) + \chi_{top}(L_p) +  \chi_{top}(L_q) \, .
    \]
    The vanishing of $\chi_{top}(X^{\circ})$ follows from the multiplicativity of the Euler characteristic for smooth fibrations. The vanishing of $\chi_{top}(X)$ follows.
\end{proof}

\subsection{Proof of Theorem \ref{T: fibration in Kahler is transversely projective}}
    Aiming at a contradiction, let us assume that the orbifold base of $f$ is bad. We will keep the notation of the proof of Lemma \ref{L: Euler zero}. Moreover, in the case $f$ has two multiple fibers we can assume, without loss of generalities, that the multiplicities of the fibers are relatively prime.

    Let $\iota: L \rightarrow X$ be the inclusion of the general fiber $L$ of $f$.
    According to  Deligne's invariant subspace theorem, see for instance \cite[Theorem 16.2]{zbMATH01927232},
    the image of $\iota^*:  H^1(X, \C) \rightarrow H^1(L, \C)$ is the subspace invariant by the monodromy representation of the local system $R^1f_* \C$ defined on $B^\circ$. Because it is trivial (see proof of Lemma \ref{L: Euler zero}), we conclude that $\iota^*$ is surjective. Furthermore, $\iota^*$ defines a morphism of Hodge structures. Hence we get a surjective morphism
    \[
        \iota^*: H^0(X,\Omega^1_X) \rightarrow H^0(L, \Omega^1_{L}).
    \]
    Because $L$ is of genus one, we have a nowhere vanishing $1$-form $\omega_L \in H^0(L, \Omega^1_L)$. 
    
    Let $\omega \in H^0(X, \Omega^1_X)$ such that $\iota^* \omega = \omega_L$ and consider the cohomology class of $\alpha = [\omega \wedge \overline \omega] \in H^2(X, \mathbb C)$. Note that the contraction of $\alpha$ with the homology class of $L$ is equal to $\int_L \alpha$ and, therefore,  non-zero. It follows that the same holds true for every other fiber of $f$ (multiple or not) since their homology classes all lie on the same line. Hence the $1$-form $\omega$ is non-zero in every fiber and it  defines a foliation $\G$ completely transversal to $\F$. In particular, $T_X = T_{\F} \oplus T_{\G}$.  Using \cite[Theorem C]{zbMATH01440936}, see also \cite{zbMATH05145488} and \cite[Theorem 1.4]{zbMATH07634625}, the universal cover of $\pi: \til{X} \rightarrow X$ is the product of two simply-connected Riemann surfaces, and the foliation $\pi^*(\F)$ is the fibration over one of them. This would imply that $B$ is a good orbifold, a contradiction. 
\qed
\subsection{Quasi-smooth fibrations on surfaces with bad orbifold base}

From the Theorem \ref{T: fibration in Kahler is transversely projective}, we know that if $f:X \rightarrow \Pj^1$ is a quasi-smooth fibration  with bad orbifold base, then $X$ must be a non K\"ahler compact surface. Moreover, in Example \ref{Ex: Hopf}, we constructed examples of quasi-smooth fibrations on Hopf surfaces with bad orbifold base. From this, one can ask if there are other examples. The next proposition shows that this is the only case. 

\begin{prop}
    Let $X$ be a compact complex surface and $f:X \rightarrow \Pj^1$ be a quasi-smooth fibration with bad orbifold base. Then $X$ is a Hopf surface.
\end{prop}
\begin{proof}
    Let $f:X \rightarrow \Pj^1$ be a fibration with a bad orbifold base $(\Pj^1, \frac{m-1}{m}p + \frac{n-1}{n}q)$ (for the case $(\Pj^1, \frac{m-1}{m}p)$ the computation is similar). By Lemma \ref{L: Euler zero}, $f$ is an elliptic fibration, and since the fibers are irreducible curves, they can not be $(-1)$-curves. That is, $f$ is a relatively minimal elliptic fibration.
    
    The first step is to conclude that $\kod(X)= -\infty$. By \cite[Chapter V, Proposition 12.5]{zbMATH02008523},
    $\kod(X)= -\infty$ if, and only if, $\delta(f)<0$, where $\delta(f)$ of a elliptic fibration $f:X \rightarrow B$ is defined as 
    \[
    \delta(f):= \chi(X,\mathcal{O}_X) + \left(2g(S) - 2 + \deg(\Delta) \right).
    \]
In our situation, we have that $\chi(X,\germe_X) = deg(f_*\omega_{X|\Pj^1})$ by \cite[Chapter III, Theorem 18.2]{zbMATH02008523}, and by \cite[Chapter III, Theorem 18.2]{zbMATH02008523}, we have $deg(f_*\omega_{X|\Pj^1}) = 0$. Therefore,

\[
\delta(f) = \chi(X,\germe_X) - \frac{1}{m} - \frac{1}{n} = - \frac{1}{m} - \frac{1}{n} < 0,
\]
and we conclude that $\kod(X) =- \infty$.

From Theorem \ref{T: fibration in Kahler is transversely projective} and Lemma \ref{L: Euler zero}, we see that $X$ must be a non K\"ahler surface. By \cite[Chapter V, Theorem 18.6]{zbMATH02008523}, it follows that $X$ is a Hopf surface.

\end{proof}

\subsection{Quasi-smooth fibrations on projective threefolds} It seems natural to wonder whether Theorem \ref{T: fibration in Kahler is transversely projective} holds for quasi-smooth fibrations over curves on arbitrary compact K\"ahler manifolds. We believe that this is the case, and we could check it for quasi-smooth fibrations on projective threefolds.

\begin{thm}\label{T:fibration on 3folds}
    Let $X$ be a projective manifold of dimension three. If $f : X \to B$ is a quasi-smooth fibration then the orbifold base of $f$ is good.
\end{thm}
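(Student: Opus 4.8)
The plan is to reduce Theorem~\ref{T:fibration on 3folds} to the same contradiction mechanism used in the surface case, by showing that a bad orbifold base forces the three--fold $X$ (after a finite \'etale cover) to be a product with an elliptic curve in a way incompatible with the conclusion of Theorem~\ref{T:BNF}. First I would invoke Lemma~\ref{L:euler zero}: if $(B,\Delta)$ is bad, then $B\simeq\mathbb P^1$ and the general fiber $L$ of $f$ is a smooth projective surface with $\chi_{top}(L)=0$, and $\chi_{top}(X)=0$ as well. By Reeb stability, as in the lemma, all smooth fibers are unramified $n$--sheeted covers of the support $L_p$ of the multiple fiber, and (passing to an \'etale cover of order $\gcd(n,m)$ if there are two multiple fibers) we may assume $B^{\circ}\simeq\mathbb C$ or $\mathbb C^*$ and that the local systems $R^{\ell}f_{*}\mathbb C$ over $B^{\circ}$ have trivial monodromy. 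As in the surface proof, Deligne's invariant cycle theorem then gives that $\iota^{*}\colon H^1(X,\mathbb C)\to H^1(L,\mathbb C)$ is surjective, hence $\iota^{*}\colon H^0(X,\Omega^1_X)\to H^0(L,\Omega^1_L)$ is surjective by strictness of morphisms of Hodge structures.

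The next step is to upgrade this to a splitting of the tangent bundle. From $\chi_{top}(L)=0$ one does not immediately get that $L$ is a torus, but since $L$ has a quasi--smooth (indeed smooth, over $B^{\circ}$) elliptic structure over $L_p$, and is an \'etale cover of $L_p$, the Euler characteristic vanishing combined with the structure of surfaces with $\chi_{top}=0$ admitting such covers is quite restrictive; in fact the general fiber carries a fibration onto a curve, and I would argue that $L$ is (up to \'etale cover) either a torus, a hyperelliptic (bielliptic) surface, or a product $E\times C$ or a minimal properly elliptic surface, but in every case $H^0(L,\Omega^1_L)\neq 0$. Choosing $0\neq\omega_L\in H^0(L,\Omega^1_L)$ and lifting to $\omega\in H^0(X,\Omega^1_X)$ with $\iota^{*}\omega=\omega_L$, I would then need to show $\omega$ is nowhere zero on each fiber; here the argument is subtler than in the surface case because $\omega_L$ need not be nowhere vanishing when $\dim L=2$. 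To handle this, I would instead use the image of $\iota^{*}$ more carefully: pick a holomorphic $1$--form on $X$ whose restriction to $L$ generates a rank--one (hence nowhere vanishing, after averaging along the elliptic fibration on $L$, or by choosing the right isotrivial direction) subsheaf of $\Omega^1_L$, using that $L_p$ is itself elliptically fibered. Then $\omega\wedge\bar\omega$ has nonzero period on $L$, hence on every fiber of $f$ since the fiber classes are proportional in $H^{4}(X)$, so $\omega$ is nowhere zero along fibers and defines a codimension--one foliation $\mathcal G$ transverse to $\mathcal F$ away from a set that does not meet the fibers.

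With $T_X$ splitting (at least generically, and then everywhere by the transversality on fibers) as the sum of $T_{\mathcal F}$ and the distribution defined by $\omega$, I would apply a Beauville--type splitting theorem for the universal cover. Here is where the three--dimensional case genuinely needs projectivity: I would use the structure theory of the Albanese map and the fact that a projective threefold whose tangent bundle (or cotangent bundle) admits a holomorphic $1$--form without zeros has its Albanese map being a smooth fibration in one direction, together with results on foliations defined by closed holomorphic $1$--forms (Tischler/Brunella--type statements, or \cite[Theorem C]{beauville-MR1760872} applied after passing to a finite cover) to conclude that, after a finite \'etale cover, $\tilde X \simeq E \times Y$ with $E$ an elliptic curve and $\mathcal F$ pulled back from the projection to $E$, equivalently $\mathcal F$ is (up to \'etale cover) a smooth fibration over an elliptic curve. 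By Proposition~\ref{P:fibrations vs structures} this forces the orbifold base of $f$ to be good, contradicting badness.

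\begin{remark}
The main obstacle is the middle step: unlike the surface case, when $\dim L=2$ a nonzero holomorphic $1$--form on the general fiber need not be nowhere vanishing, so producing a holomorphic $1$--form on $X$ that restricts to a nowhere--vanishing form on every fiber of $f$ requires using the internal elliptic fibration structure of the fibers (coming from Reeb stability over the multiple fiber) and, most likely, the hypothesis that $X$ is projective in order to invoke a classification of threefolds admitting a holomorphic $1$--form without zeros. Once that form is in hand, the rest follows the surface argument, with the Beauville splitting theorem replaced by its higher--dimensional analogue together with the structure of the Albanese map.
\end{remark}
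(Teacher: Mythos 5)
Your opening steps (Lemma~\ref{L:euler zero}, Deligne's invariant cycle theorem, surjectivity of $\iota^*$ on holomorphic $1$-forms) match the paper's strategy, but the central step of your argument fails for a dimensional reason. On a threefold, the distribution $\ker\omega$ cut out by a holomorphic $1$-form $\omega$ has rank two, and $T_{\F}$ also has rank two, so the claimed splitting ``$T_X = T_{\F}\oplus(\text{distribution defined by }\omega)$'' is impossible inside the rank-three bundle $T_X$: what you get is at best $T_{\F}+\ker\omega = T_X$ with a one-dimensional intersection, and no Beauville-type splitting theorem applies to that. This is precisely why the paper does \emph{not} imitate the surface proof with $1$-forms. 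In the one case where it does run a period argument (abelian surface fibers), it uses holomorphic $2$-forms: a $2$-form that is nowhere zero along the fibers of a threefold has one-dimensional kernel, and this produces the required \emph{one}-dimensional foliation $\G$ with $T_X=T_{\F}\oplus T_{\G}$, to which the splitting theorem of \cite{zbMATH05145488} applies. Your acknowledged difficulty about holomorphic $1$-forms on a surface having zeros is real, but the proposed fix (``averaging along the elliptic fibration'') does not repair the rank mismatch even where it makes sense.

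There are two further gaps. First, you omit the reduction, via the MMP for smooth codimension-one foliations on projective threefolds, to the case where every fiber is a \emph{minimal} surface; this is where projectivity is actually used, and without it the Enriques--Kodaira classification of fibers with $\chi_{top}=0$ is not the clean list you (and the paper) rely on (e.g.\ blow-ups of ruled surfaces over curves of genus $\ge 2$ also have vanishing Euler characteristic). Second, your case list omits $\mathbb P^1$-bundles over elliptic curves, which do occur and carry no elliptic fibration to average along; the paper handles this case by observing that $X$ is then itself a $\mathbb P^1$-bundle over a surface $S$, that $f$ descends to a quasi-smooth fibration on $S$, and that Theorem~\ref{T: fibration in Kahler is transversely projective} applies to $S$. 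The same descent-to-a-surface mechanism (after étale-trivializing the isotrivial elliptic fibration tangent to $f$, via the cited structure theorem) is how the paper treats the bielliptic and properly elliptic fibers --- again without constructing any transverse foliation on $X$. So while your endgame (``after a finite étale cover the fibration becomes smooth, hence the orbifold base is good'') is the right target, the route you propose to reach it does not go through.
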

\begin{proof}
    After applying the minimal model program (MMP) for codimension one smooth foliations on projective threefolds \cite[Theorem 1.3]{zbMATH07132579} we can assume that every fiber of $f$ is minimal, \ie does not contain a smooth curve isomorphic to $\mathbb P^1$ with self-intersection $-1$.

    As in the proof of Theorem \ref{T: fibration in Kahler is transversely projective}, we will assume that the orbifold base is bad and look for a contradiction. 
    
    Lemma \ref{L: Euler zero} implies that every fiber of $f$ has zero topological Euler characteristic. An inspection of the Enriques-Kodaira classification \cite[Chapter VI, Table 10]{zbMATH02008523} shows that the fibers of $f$ are
    \begin{enumerate}
        \item $\mathbb P^1$-bundles over elliptic curves when they have negative Kodaira dimension; or
        \item bi-elliptic surfaces or abelian surfaces when they have Kodaira dimension zero; or
        \item minimal properly elliptic surfaces when they have Kodaira dimension one.
    \end{enumerate}

    If the fibers have negative Kodaira dimension then $X$ itself is a $\mathbb P^1$-bundle over a smooth projective surface $S$ and the fibration
    descends to a fibration on $S$. In this case, the result follows from Theorem \ref{T: fibration in Kahler is transversely projective}.

    If the fibers are abelian surfaces then we can argue as in the proof of Theorem \ref{T: fibration in Kahler is transversely projective}, but now using $2$-forms instead of $1$-forms, to obtain a one-dimensional foliation $\G$ everywhere transverse to $\F$.  It follows from \cite{zbMATH05145488} that the universal covering is the product $\mathbb C^2 \times B = \mathbb C^2 \times \mathbb P^1$ with $f$ lifting to the fibration defined by projection to $\mathbb P^1$. It follows that $f$ is good.

    If the fibers are bi-elliptic surfaces or have Kodaira dimension one then there exists a smooth foliation $\mathcal E$ by elliptic curves tangent to the fibration. Since $\mathcal E$ is smooth, it must be an isotrivial fibration. Therefore we can apply \cite[Theorem 1.4]{zbMATH07634625} to produce a finite étale covering $p:Y \to X$ such that $p^*\mathcal E$ is a locally trivial elliptic fibration (no multiple fibers). Again, the fibration $f$ descends to a fibration on a smooth projective surface and we can conclude using \ref{T: fibration in Kahler is transversely projective} as in the case of negative Kodaira dimension .
\end{proof}

\begin{remark}
    We only used the projectiveness of $X$ in order to apply the results on the MMP for codimension one foliations. It is very likely that the argument above can be adapted to show the result for compact K\"ahler threefolds.
\end{remark}

\section{Smooth Turbulent foliations on compact K\"ahler surfaces}\label{S:turbulent}
In this section, we are going to study when a smooth turbulent foliation on a surface admits a transversely projective structure.

\subsection{Definition}
Let $X$ be a complex surface. A smooth foliation $\F$ on $X$ is called turbulent if there exists a proper  fibration $f:X\rightarrow B$ such that the fibers of $f$ are curves of genus one, and $\F$ is completely transverse to the general fiber of $f$. We will say that $\F$ is turbulent with respect to $f$, and that $f$ is the reference fibration of $\F$.

It follows from the definition, that the reference fibration of a smooth turbulent foliation is automatically quasi-smooth. Indeed, as explained in \cite[Section 4.3]{zbMATH02150908}, if the support of a fiber of $f$ is not smooth then it is not irreducible and one of its components is invariant by $\F$. Moreover, since such component have negative self-intersection, Camacho-Sad formula implies that it must contain a singularity of the foliation. But this contradicts the smoothness of $\F$.

\subsection{Turbulent foliations without invariant fibers}

\begin{prop}\label{P: turbulent foliation without F-invariant fiber is transversely projective}
    Let $\F$ be a turbulent foliation on a compact K\"ahler surface $X$ with reference fibration $f:X \to B$. If no fiber of $f$ is $\F$-invariant then
    $\F$ is transversely projective.
\end{prop}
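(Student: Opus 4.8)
The plan is to show that the stated hypothesis forces the orbifold base $(B,\Delta)$ of $f$ to be good, and then to conclude exactly as in the ``good $\Rightarrow$ transversely projective'' direction of Proposition \ref{P:fibrations vs structures}. The first step is to upgrade ``no fiber is $\F$-invariant'' to ``$\F$ is transverse to every fiber of $f$ at every point''. Recall that $f$ is automatically quasi-smooth, so every fiber is $n_b L_b$ with $L_b$ smooth and irreducible, and distinct $L_b$ are disjoint. The composition $T_\F \hookrightarrow T_X \xrightarrow{df} f^* T_B$ is a section of $\Omega^1_\F \otimes f^* T_B$; its zero divisor always contains $f^*\Delta = \sum_b(n_b-1)L_b$, because $df$ itself degenerates along the multiple fibers, and writing this zero divisor as $f^*\Delta + R$ with $R$ effective, transversality of $\F$ to a general fiber $F$ gives $R\cdot F = 0$. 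Hence every component of $R$ is contained in a fiber, i.e.\ equals some $L_b$; and, the $L_b$ being irreducible and pairwise disjoint, $L_b$ occurs in $R$ exactly when $\F$ is tangent to $L_b$ along all of $L_b$, i.e.\ when $L_b$ is $\F$-invariant. Under our hypothesis $R=0$, so $\F$ is transverse to every $L_b$ at every point. In particular, over the cofinite set $B^\circ\subset B$ of regular values of $f$, the map $f\colon X^\circ \to B^\circ$ is a proper submersion with fiber a fixed elliptic curve $E$ and $\F|_{X^\circ}$ is a holomorphic Ehresmann connection for it; lifting loops along $\F$ then produces the monodromy representation $\rho\colon \pi_1(B^\circ,b_0) \to \Aut(E)$.

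The crucial step is the claim that, for each $b\in B\setminus B^\circ$, a small loop $\gamma_b$ around $b$ satisfies $\ord(\rho(\gamma_b)) = n_b$. This I would prove using Kodaira's local model of a multiple fiber of a quasi-smooth elliptic fibration: a neighbourhood of $L_b$ is biholomorphic to $(\widetilde D\times E)/(\Z/n_b)$, where $\Z/n_b$ acts on the disk $\widetilde D$ by rotation and on $E$ by a translation, and $f$ is induced by $(u,e)\mapsto u^{n_b}$. Since $\F$ is transverse to all fibers and $\widetilde D$ is simply connected, the pulled-back foliation is conjugate, in a way respecting the fibration, to the projection onto $E$; freeness of the $\Z/n_b$-action — forced by smoothness of $X$ — then makes the translation on $E$ have order exactly $n_b$; and unwinding a small loop around $b$ identifies $\rho(\gamma_b)$ with (a conjugate of) this translation. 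From this I would deduce that $(B,\Delta)$ is good. Indeed, by Theorem \ref{T:BNF} a bad $(B,\Delta)$ has $B\simeq\mathbb P^1$ with either a single special point of multiplicity $n\ge2$ — but then $B^\circ$ is simply connected, so $\rho(\gamma_b) = \identity$ has order $1\neq n$ — or two special points of distinct multiplicities $n\neq m$ — but then $\pi_1(B^\circ)\cong\Z$ with its two elementary loops mutually inverse, forcing $\ord(\rho(\gamma_0)) = \ord(\rho(\gamma_\infty))$, i.e.\ $n=m$. Both are contradictions, so $(B,\Delta)$ is good.

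It remains to perform the descent, exactly as in the proof of Proposition \ref{P:fibrations vs structures}. Choosing an unfolding $(C,G,\pi)$ of $(B,\Delta)$, the normalized fiber product $\widetilde X = (X\times_B C)^\nu$ carries a smooth fibration $\widetilde f\colon \widetilde X\to C$, the group $G$ acts on $\widetilde X$ with $\widetilde X/G = X$, and the pull-back $\widetilde\F$ of $\F$ is transverse to $\widetilde f$. By Example \ref{E:transverse to fibrations}, $\widetilde\F$ is transversely projective via the structure induced fiberwise by the canonical projective structure of the elliptic fiber $E$; since every biholomorphism of $E$ preserves that canonical projective structure, this structure is $G$-invariant and hence descends to a transversely projective structure for $\F$ on $X$.

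The main obstacle is the second step: showing that the orbifold base is good, which reduces to the local computation that the monodromy around a multiple fiber has order exactly equal to the multiplicity of the fiber. Translating the hypothesis (first step) and the descent (last step) are essentially bookkeeping once the relevant local models and Example \ref{E:transverse to fibrations} are available.
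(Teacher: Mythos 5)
Your proof is correct, but it reaches the key intermediate statement --- that the orbifold base $(B,\Delta)$ is good --- by a genuinely different route than the paper. The paper simply quotes Theorem \ref{T: fibration in Kahler is transversely projective}, which asserts goodness of the orbifold base of \emph{any} quasi-smooth fibration on a compact K\"ahler surface and whose proof goes through Deligne's invariant subspace theorem and Beauville-type splitting results; it then passes to the \'etale cover given by an unfolding and applies Example \ref{E:transverse to fibrations}, exactly as in your last step. You instead exploit the transversality hypothesis itself: after upgrading ``no invariant fiber'' to ``everywhere transverse'' via the tangency divisor (a point the paper leaves implicit but which is genuinely needed to invoke Example \ref{E:transverse to fibrations}), you read off from Kodaira's local model that the Ehresmann monodromy of $\F$ around a fiber of multiplicity $n_b$ is a translation of the elliptic fiber of order exactly $n_b$, and then the two bad configurations of Theorem \ref{T:BNF} are ruled out by elementary $\pi_1(\C)$ and $\pi_1(\C^*)$ considerations. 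What your approach buys: it is entirely local-plus-topological, avoids Hodge theory, and never uses the K\"ahler hypothesis, so it proves the proposition for turbulent foliations on arbitrary compact complex surfaces --- consistent with the paper's later observation that turbulent foliations on Hopf surfaces whose reference fibration has bad orbifold base always possess invariant fibers. What the paper's approach buys is brevity, since Theorem \ref{T: fibration in Kahler is transversely projective} is established anyway and disposes of this case in one line. The individual steps of your argument (the computation showing the tangency divisor $R$ satisfies $R\cdot F=0$ and hence vanishes, the freeness argument forcing the deck transformation to act on $E$ by a translation of order exactly $n_b$, and the $G$-invariance of the canonical projective structure on the elliptic fiber in the final descent) all check out.
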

\begin{proof}
    If the fibration is smooth than the result follows from Example \ref{E:transverse to fibrations}. If the fibration is only quasi-smooth then the natural orbifold structure on the base is good according to Theorem \ref{T: fibration in Kahler is transversely projective}. Therefore, there exists an étale Galois covering $\pi : Y \to X$  induced by a ramified covering $C \to B$ such that the reference fibration of  the turbulent foliation $\pi^* \F$ is smooth. Example \ref{E:transverse to fibrations} provides a transverse projective structure for $\pi^* \F$. Moreover, since such projective structure is obtained by extending the canonical projective structure on a fiber of $f$, it must be invariant by the action of the deck transformations of $\pi$. Hence it descends to a transversely projective structure for $\F$.
\end{proof}

\subsection{Multiple and invariant fibers}
Let $X$ be a smooth complex surface, let $\F$ be a smooth turbulent foliation on $X$, and let $f: X \to B$ be its reference fibration.
Let $B^{\circ} \subset B$ be the set of regular values of $f$, and let $B^{\ast}\subset B^{\circ}$ be equal to $B^{\circ}$ minus the points
corresponding to smooth $\F$-invariant fibers.

The transversality of $\F$ and the general fiber of $f$ implies that the restriction of $f$ to $\pi^{-1}(B^{\circ})$ is locally trivial.
Moreover, over discs $\mathbb D \subset B^{\circ}$ we have that $f^{-1}(\mathbb D) \simeq \mathbb D \times E$ and the foliation $\F$ is either conjugated to
the horizontal foliation defined by $dz$, when none of the fibers are  $\F$-invariant, or, there exists a meromorphic $1$-form $\alpha$ on $\mathbb D$ with polar divisor equal to the union of the points below the $\F$-invariant fibers such that $\restr{\F}{f^{-1}(\mathbb D)}$ is defined by $\alpha + dz$.

The description of $f$ and $\F$ around a multiple fiber of multiplicity $n$ is similar. If $\mathbb D \subset B$ is a disc with $\mathbb D^*\subset B^{\circ}$ then $f^{-1}(\mathbb D)$ is the quotient of $\mathbb D \times E$ by the automorphism
\[
    (x,z) \mapsto \left(\exp(2 \pi \sqrt{-1} / n) x , z + \tau \right) ,
\]
where $\tau \in E$ is a torsion point of order $n$, according to \cite[Section 13 of Chapter V]{zbMATH02008523}. The foliation $\F$ is like in the smooth case except for the extra condition, in the case $f^{-1}(0)$ is a $\F$-invariant fiber, that the $1$-form $\alpha$ is invariant by the autmorphism $x \mapsto \exp(2 \pi \sqrt{-1} / n) x$.

\begin{lemma}\label{L: turbulent foliation given by closed meromorphic 1-form}
    Let $\F$ be a smooth turbulent foliation on a complex surface $X$. Then, after passing to a finite étale covering of $X$, the foliation is given by a closed meromorphic 1-form without zeroes.
\end{lemma}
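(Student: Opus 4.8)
The plan is to reduce to the simplest possible local model by a sequence of finite étale base changes, and then glue the resulting closed meromorphic $1$-forms into a global one. First I would pass to an étale covering to kill the multiple fibers: by Theorem \ref{T: fibration in Kahler is transversely projective} the orbifold base $(B,\Delta)$ is good (in the Kähler case; for the general smooth surface case this is part of Brunella's setup), so there is an unfolding $(C,G,\pi)$ and a finite covering $Y \to X$ obtained by normalizing $X\times_B C$ on which the reference fibration becomes genuinely smooth — no multiple fibers. Replacing $X$ by this covering, we may assume $f:X\to B$ is a smooth elliptic fibration and $\F$ is everywhere transverse to the general fiber, with a finite set of $\F$-invariant smooth fibers over points $p_1,\dots,p_k\in B$.

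Next I would produce the closed meromorphic $1$-form locally and check the local monodromy. Over a disc $\mathbb D\subset B^\circ$ we have $f^{-1}(\mathbb D)\simeq \mathbb D\times E$ with $\F$ defined by $\alpha + dz$, where $\alpha$ is a meromorphic $1$-form on $\mathbb D$ whose polar set is exactly the set of $\F$-invariant fibers inside $\mathbb D$, as recalled just above the statement. The $1$-form $\alpha + dz$ is already closed (since $d\alpha=0$ on a curve) and has no zeroes, because $dz$ is a nowhere-vanishing $1$-form on the $E$-factor and $\alpha$ is pulled back from the base. The issue is that these local forms need not agree on overlaps: on $B^\circ$ the fibration is only \emph{locally} trivial, and the identifications of the fibers with a fixed $E$ differ by the monodromy of the elliptic fibration, i.e. by a representation $\pi_1(B^\circ)\to \Aut(E) = E\rtimes(\text{finite})$. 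So after a further finite étale base change (to trivialize the finite part of this monodromy) and, if necessary, a translation in the fibers, I would arrange that the local forms $\alpha_i + dz$ are all expressed in the \emph{same} fiber coordinate $z$ on a common trivialization $X^\circ\simeq B^\circ\times E$; the translations $z\mapsto z+\tau$ do not affect $dz$, so they cause no obstruction here.

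It then remains to globalize $\alpha$ itself over $B$ (or rather over the curve $C$, now playing the role of the base). The local pieces $\alpha_i$ are meromorphic $1$-forms on discs in $B$ that differ on overlaps by the exact forms coming from the additive ($E$-valued) part of the monodromy and from the chosen fiber translations; I would argue that after the base changes already performed this additive monodromy becomes trivial (this is where one may need to enlarge the covering once more, passing to the cover associated to the kernel of the monodromy into the finitely-generated group generated by the periods involved), so the $\alpha_i$ patch to a single global meromorphic $1$-form $\alpha$ on the base with poles exactly over the $\F$-invariant fibers, and $\omega := f^*\alpha + dz$ is the desired global closed meromorphic $1$-form without zeroes on the (covering of the) surface. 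I expect the main obstacle to be precisely this last gluing step: controlling the monodromy of the elliptic fibration finely enough — the $\Aut(E)$-valued monodromy has both a finite "multiplicative" part and a continuous "additive" part, and one must check that a single finite étale covering simultaneously trivializes the finite part and makes the translation-valued cocycle a coboundary, so that $dz$ descends to a well-defined global $1$-form and $\alpha$ descends to a global meromorphic $1$-form. Once the fibration is a genuine product $B'\times E$ with $\F$ defined by $f^*\alpha+dz$, closedness and absence of zeroes are immediate, and the covering $X'\to X$ is finite étale by construction.
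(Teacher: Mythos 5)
There is a genuine gap, and it sits exactly where you flag your ``main obstacle'': the globalization step. Your plan tries to make the reference fibration a product $B'\times E$ after a finite \'etale covering, i.e.\ to kill both the finite and the translation part of the monodromy $\pi_1(B^\circ)\to\Aut(E)=E\rtimes(\text{finite})$. This is not achievable in general: the translation-valued part can have infinite (even dense) image, and a cocycle with values in $E$ need not become a coboundary on any finite covering, so the reduction ``once the fibration is a genuine product'' that your last sentence relies on is unavailable. It is also unnecessary, and the attempt to glue $\alpha$ and $dz$ \emph{separately} is what manufactures the apparent obstruction: if two local trivializations differ by a fiber translation $z_i=z_j+c(x)$, then $dz_i=dz_j+dc$ and $\alpha_i=\alpha_j-dc$, so the full forms $\omega_i=\alpha_i+dz_i$ and $\omega_j=\alpha_j+dz_j$ already agree even though neither summand does. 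The missing idea is a normalization that rigidifies the local closed forms $\omega_i$ directly: restrict $\omega_i$ to a nearby fiber (a nowhere-vanishing holomorphic $1$-form on an elliptic curve), and rescale so that its period along a chosen generator of $H_1$ of the fiber equals $1$. This pins down each $\omega_i$ up to multiplication by a root of unity preserving the normalized period lattice, i.e.\ up to an element of $\{\pm1\}$, $\mu_4$ or $\mu_6$. The transition data is then a representation of $\pi_1(B)$ into a finite cyclic group, and passing to the associated finite covering makes the $\omega_i$ glue. Without this (or an equivalent) rigidification, your argument does not close.

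A secondary problem is your first step. The lemma is stated for an arbitrary complex surface, and it is invoked later for Hopf surfaces, which are non-K\"ahler and whose elliptic fibrations can have \emph{bad} orbifold base (the paper exhibits exactly such an example); so Theorem \ref{T: fibration in Kahler is transversely projective} cannot be used to remove multiple fibers, and the parenthetical appeal to ``Brunella's setup'' does not repair this. Fortunately the reduction is not needed: near a multiple fiber the surface is the quotient of $\mathbb D\times E$ by $(x,z)\mapsto(e^{2\pi\sqrt{-1}/n}x,\,z+\tau)$, and the local form $\alpha+dz$ (with $\alpha$ invariant under $x\mapsto e^{2\pi\sqrt{-1}/n}x$) is already invariant under this action, hence descends; so multiple fibers pose no additional difficulty and the period-normalization argument applies verbatim around them.
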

\begin{proof}
    The discussion carried out above implies the existence of an open cover $\mathcal U = \{ U_i\}$ of $B$ such that
    $\restr{\F}{f^{-1}(U_i)}$ is defined by a closed meromorphic $1$-form $\omega_i$.  We can further assume that the integral of $\omega_i$
    along generators of the homology group of the fibers does not vary with the base point. Moreover, we can also assume that this integral is equal to one at one of the generators. This rigidifies the choice of the $\omega_i$ in such a way
    that over non-empty intersections $U_i \cap U_j$, the $1$-forms $\omega_i$ and $\omega_j$ differ by multiplication of a locally constant
    function with values in $\{ \pm 1\}$, or $\{ \pm 1 , \pm i\}$, or $\{ \exp(2 \pi \sqrt{-1} j /6 ; j = 0 \ldots 5 \}$ depending on the automorphism group of the general fiber. In any case, we obtain a representation  of $\pi_1(B)$ into a finite cyclic group. After doing a base change of $f: X \to B$ to the associated étale covering of $B$, it is clearly possible to define the transformed foliation by a closed meromorphic $1$-form.
\end{proof}

We will need the following fact concerning the non existence of first integral on neighborhoods of $\F$-invariant fibers of a turbulent foliation.

\begin{lemma}\label{L: non existence of first integral for turbulent foliation}
    Let $\F$ be a turbulent foliation on a smooth complex surface $X$. Suppose that $\F$ leaves invariant some fiber of the reference fibration.  Then $\F$ does not admit a non-constant meromorphic first integral.
\end{lemma}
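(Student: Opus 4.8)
\subsection*{Proof proposal}

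The plan is to argue by contradiction: assume $g : X \dashrightarrow \mathbb P^1$ is a non-constant meromorphic first integral for $\F$, and extract a contradiction from the local behaviour of $\F$ near an $\F$-invariant fiber of the reference fibration $f : X \to B$.

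\emph{First reduction.} I would start by showing that $g$ is non-constant on the general fiber of $f$. If instead $g$ were constant along every fiber of $f$, it would factor as $g = \bar g \circ f$ with $\bar g$ a non-constant meromorphic function on $B$; the level sets of $g$ would then be unions of fibers of $f$, and since those level sets are $\F$-invariant, the general fiber of $f$ would be $\F$-invariant, contradicting the transversality in the definition of a turbulent foliation. Hence $g|_{f^{-1}(b)}$ is a non-constant meromorphic function on the genus-one curve $f^{-1}(b)$ for all $b$ outside a finite subset of $B$.

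\emph{Local model near an invariant fiber.} Let $L_0$ be the support of an $\F$-invariant fiber and let $0 = f(L_0)$. Using the normal forms recalled in the previous subsection --- and, when the fiber over $0$ is multiple of multiplicity $n$, replacing $f^{-1}(\mathbb D)$ by its cyclic cover $\mathbb D \times E$ (the cyclic action $(x,z)\mapsto(\exp(2\pi\sqrt{-1}/n)x, z+\tau)$ is free since $\tau$ has order $n$, so this cover is étale), over which $g$ pulls back to a meromorphic function that is still non-constant on the general fiber by the first reduction --- I may assume that over a small disc $\mathbb D \ni 0$ one has $f^{-1}(\mathbb D) \cong \mathbb D \times E$ with $E = \mathbb C/\Lambda$, and $\F$ defined by $dz + \alpha$, where $\alpha$ is meromorphic on $\mathbb D$, holomorphic on $\mathbb D^*$, and has a genuine pole at $0$ (this pole is precisely what makes $L_0$ invariant). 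The leaves of $\F$ inside $\mathbb D^*\times E$ are then the graphs $x \mapsto (x, z_0 - A(x) \bmod \Lambda)$ for $z_0 \in E$, where $A$ is a primitive of $\alpha$; in particular $|A(x)| \to \infty$ as $x \to 0$.

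\emph{The contradiction, and the main obstacle.} Resolving the indeterminacy of $g$ and taking a general value $v$, the level set $C_v$ is a closed, $\F$-invariant curve in $X$; by the first reduction it meets $\{x_0\}\times E$ in a non-empty finite set for some $x_0 \in \mathbb D^*$. Choosing a smooth point $q$ of $C_v$ in this intersection, the leaf $\mathcal L$ of $\F$ through $q$ --- a graph over $\mathbb D^*$ --- is contained in $C_v$ by analytic continuation (the set of $x \in \mathbb D^*$ with $(x, \phi(x)) \in C_v$ is open, closed and non-empty, hence all of $\mathbb D^*$), and therefore so is the closure $\overline{\mathcal L}$ of $\mathcal L$ in $X$. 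Restricting the graph parametrisation to the ray $x = x_0 e^{-s}$, $s \ge 0$, the points $z_0 - A(x_0 e^{-s}) \bmod \Lambda$ trace a continuous path in the compact torus $E$ whose lifts tend to infinity in $\mathbb C$; such a path cannot converge in $E$, so its closure is a connected subset of $E$ with at least two points, hence infinite, and this closure is contained in $\overline{\mathcal L} \cap (\{0\}\times E)$. Thus $C_v$ meets $L_0$ in an infinite set, whereas $g|_{L_0}$ is a fixed meromorphic function on the irreducible curve $L_0$, so $C_v \cap L_0 = (g|_{L_0})^{-1}(v)$ is finite for general $v$ --- a contradiction. The delicate point is the third step: ensuring that a general level set of $g$ genuinely contains a leaf lying inside $f^{-1}(\mathbb D^*)$ (so that the normal form applies) and that the closure of this leaf in $X$ still lies in the level set; the accumulation of the leaf onto all of a non-trivial connected subset of $L_0$, which forces the infinitude of $C_v \cap L_0$, is then immediate from $|A(x)| \to \infty$. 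The multiple-fiber case costs only the bookkeeping of passing to the local étale cover, which affects none of the above.
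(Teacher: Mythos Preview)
Your argument is correct, with one minor slip: when $\alpha$ has nonzero residue at $0$ the leaf through $q$ is not a graph over all of $\mathbb D^*$ (its monodromy around $0$ shifts $z$ by $-2\pi i\,\Res_0\alpha$); but since you only follow the leaf along the radial ray $x=x_0e^{-s}$, nothing in your accumulation argument breaks.

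The paper's proof is much shorter and rests on an observation you do not use: since $\F$ is smooth, $g$ can have no indeterminacy points (such a point would lie on infinitely many level sets, hence on infinitely many distinct leaves through it), so $g$ is a genuine morphism $X\to\mathbb P^1$. Then the $\F$-invariant fiber $F_0$, being a single leaf, is sent to a point; by continuity each nearby fiber --- a compact Riemann surface --- is mapped into a small disc in $\mathbb P^1$, hence also to a point; so $g$ is constant on fibers of $f$ near $F_0$, and your first reduction finishes immediately. Your local normal form and $\omega$-limit analysis recover the same conclusion by tracking how a single leaf spirals into $L_0$, which is correct but circuitous; the paper's route replaces all of that by one line of compactness, and works verbatim for any smooth foliation admitting a compact invariant leaf that is a fiber of a proper fibration to which $\F$ is generically transverse. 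The ``no indeterminacy'' remark is the key simplification you missed.
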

\begin{proof}
    Let $f : X \to B$ be the reference fibration of $\F$ and let $g$ be a meromorphic first integral of $\F$.
    First observe that $g$ does not have indeterminacy points. Indeed, an indeterminacy point of $g$ would correspond a point in the intersection of several distinct leaves of $\F$. The smoothness of $\F$ prevents the existence of such point. Therefore, $g$ is not only a meromorphic map, but it is actually a morphism. Since it is constant along the leaves of $\F$, it must map the $\F$-invariant $F_0$ fiber to a point. Hence all the fibers sufficiently
    close to $F_0$ must also be mapped to a point. Since $\F$ is turbulent, this implies that $g$ is constant.
\end{proof}

\subsection{Transversely projective structures}

\begin{thm}\label{T: turbulent is transversely projective if and only if (A) or (M)}
    Let $\F$ be a smooth turbulent foliation on a compact K\"ahler surface $X$. Assume that $\F$ leaves invariant  at least one fiber of the reference fibration.  Then, $\F$ admits a transversely projective structure if, and only if, after a finite étale covering, $\F$ is given by a closed meromorphic 1-form $\omega$ of one of the following types
    \begin{enumerate}
        \item\label{I:A} $\omega$ is without residues and $(\omega)_{\infty} = 2 \sum C_i$, where $C_i$ are irreducible smooth curves, $C_i \cap C_j =\emptyset, \forall i\neq j$; or 
        \item\label{I:M} $\omega$ is logarithmic with residues of the form $\pm \lambda$, for some $\lambda \in \C^*$.
    \end{enumerate}
\end{thm}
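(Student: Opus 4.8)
\emph{Reductions and local set‑up.} By Lemma~\ref{L: turbulent foliation given by closed meromorphic 1-form}, together with Corollary~\ref{C: fibration in Kahler is transversely projective} applied to the quasi‑smooth reference fibration $f$, we may assume after a finite étale covering that $\F$ is defined by a closed meromorphic $1$‑form $\omega$ without zeroes and that $f$ has no multiple fibers; the $\F$‑invariant fibers $C_1,\dots,C_k$ are then smooth, irreducible and pairwise disjoint, and $k\ge 1$. Each $C_i$ must be a pole of $\omega$: if $\omega$ were holomorphic near $C_i$, then $\omega|_{C_i}=0$ kills its periods over $\pi_1(C_i)$, so $\omega$ is exact on a tubular neighbourhood, and restricting a primitive to the nearby elliptic fibers forces $\F$ to agree with $f$ there, against turbulence; a similar argument (using that the holonomy of $\F$ transverse to the general fiber is by fixed‑point‑free translations) shows $(\omega)_\infty$ has no other component, so $(\omega)_\infty=\sum_i m_i C_i$, $m_i\ge 1$. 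Write $\lambda_i$ for the residue of $\omega$ along $C_i$ (so $m_i=1\Rightarrow\lambda_i\ne 0$). On a tubular neighbourhood of $C_i=\{x=0\}$ one has $\omega=(G_i'(x)+\lambda_i x^{-1})\,dx+dt$ with $t$ a coordinate along $C_i$ and $G_i$ of pole order $m_i-1$, whence the local primitive $h=G_i(x)+\lambda_i\log x+t+(\text{holomorphic})$.

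\emph{Sufficiency.} Put $U=f^{-1}(B^{\ast})=X\setminus(\omega)_\infty$. As in the proof of Example~\ref{E:transverse to fibrations}, $\widetilde U$ is biholomorphic to $\widetilde{B^{\ast}}\times\mathbb C$ so that $h=\int\omega$ reads $h(b,z)=z+\varphi(b)$; in particular $h\colon\widetilde U\to\mathbb C$ is a surjective submersion with connected fibers. In case~\ref{I:A}, $\omega$ has no residues and no periods on the simply connected $\widetilde X$, so $h$ extends to a meromorphic $h\colon\widetilde X\to\mathbb P^1$ which, since $m_i=2$ gives $h\sim c_i x^{-1}$ near $C_i$, is a global submersion; the flat $\mathbb P^1$‑bundle over $X$ assembled from $h$ and the period homomorphism of $\omega$ is a regular transversely projective structure. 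In case~\ref{I:M}, the residues of $\omega/\lambda$ lie in $\{\pm 1\}$, so $e^{h/\lambda}$ is single‑valued around each $C_i$, where it equals $x^{\pm 1}$ times a unit; it extends to a submersion $\widetilde X\to\mathbb P^1$ and gives a regular structure with monodromy in $\mathbb C^{\ast}\subset\Aut(\mathbb P^1)$. In both cases the developing map is equivariant, up to post‑composition with $\Aut(\mathbb P^1)$, for the finite deck group, so the structure descends to the original surface.

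\emph{Necessity.} Suppose $\F$ carries a regular transversely projective structure, with developing map $\mathrm{dev}\colon\widetilde X\to\mathbb P^1$ and monodromy $\rho\colon\pi_1(X)\to\mathrm{PSL}_2(\mathbb C)$. On $\widetilde U$ both $\mathrm{dev}$ and $h$ are first integrals of the lifted foliation, and since $h$ has connected fibers, $\mathrm{dev}=\Phi\circ h$ for a non‑constant holomorphic $\Phi\colon\mathbb C\to\mathbb P^1$ with $\Phi(\zeta+\oint_\gamma\omega)=\rho(\gamma)\,\Phi(\zeta)$. A small loop $\gamma_i$ around $C_i$ bounds a disc in $X$, so $\rho(\gamma_i)=\mathrm{id}$ while $\oint_{\gamma_i}\omega=2\pi\sqrt{-1}\,\lambda_i$; hence $\Phi$ is $2\pi\sqrt{-1}\,\lambda_i$‑periodic for every $i$. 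Near $C_i$: if $\lambda_i\ne 0$ then $\Phi(\zeta)=\psi(e^{\zeta/\lambda_i})$ and $e^{h/\lambda_i}=e^{G_i(x)/\lambda_i}\cdot x\cdot(\text{unit})$ is meromorphic along $C_i$ only if $G_i$ is holomorphic, i.e. $m_i=1$; if $\lambda_i=0$ then $h$ is single‑valued near $C_i$ with $h\sim c_i x^{-(m_i-1)}$, and submersivity of $\mathrm{dev}=\Phi(h)$ along $C_i$ forces $m_i=2$. The dichotomy: if all $\lambda_i=0$ then $(\omega)_\infty=2\sum C_i$, case~\ref{I:A}. If some $\lambda_i\ne 0$, then $\Phi$ is periodic, so a pole with $\lambda_j=0$ (where $h\sim c_j x^{-(m_j-1)}$, $m_j\ge 2$) would make $e^{h/\lambda_i}$, hence $\mathrm{dev}$, have an essential singularity there — impossible; thus all $\lambda_j\ne 0$ and all $m_j=1$. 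The period group of the non‑constant $\Phi$ is infinite cyclic, say $2\pi\sqrt{-1}\,\mu\,\mathbb Z$ (a doubly‑periodic $\Phi$ would have no limit along $C_i$, contradicting continuity of $\mathrm{dev}$); each $\lambda_j\in\mu\mathbb Z$, and submersivity of $\mathrm{dev}=\psi(e^{h/\mu})$ along $C_j$ — where $e^{h/\mu}\sim x^{\lambda_j/\mu}$ times a unit — forces $\lambda_j/\mu=\pm 1$. This is case~\ref{I:M} with $\lambda=\mu$.

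\emph{Main obstacle.} The delicate part is the necessity argument: converting the triviality of the local monodromy of $\rho$ around each invariant fiber (from contractibility of small loops) into periodicity of the auxiliary map $\Phi$, and then classifying such $\Phi$ so as to simultaneously exclude poles of order $\ge 3$ and force all logarithmic residues of $\omega$ to coincide up to sign. A secondary technical point is the reduction to a smooth reference fibration with $(\omega)_\infty$ supported on fibers, which is what makes the product description of $\widetilde U$, and hence the factorization $\mathrm{dev}=\Phi\circ h$, available.
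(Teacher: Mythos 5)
Your argument is correct in substance but follows a genuinely different route from the paper in the necessity direction. The paper never introduces a developing map: it restricts the structure to a tubular neighbourhood $X_U$ of all the invariant fibers, observes via Ehresmann that $\pi_1(X_U)$ is abelian, classifies the possible abelian monodromy groups in $\Aut(\mathbb{P}^1)$ (multiplicative, additive, or the Klein four-group of involutions, the last being excluded by Lemma~\ref{L: non existence of first integral for turbulent foliation}), and then compares the closed $1$-form $\eta$ defining $\G$ on the bundle with $\omega$, using Lemma~\ref{L: non existence of first integral for turbulent foliation} again to see that $\sigma^*\eta$ is a constant multiple of $\omega$; the pole orders and residues are then read off from the local normal forms $dz$ and $dz/z$. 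Your version instead factors the global developing map as $\Phi\circ h$ through a primitive of $\omega$ on the universal cover of the complement of the poles, extracts periodicity of $\Phi$ from the triviality of $\rho$ on meridians, and classifies $\Phi$ by its period group; this buys a uniform treatment of all poles at once and makes the exclusion of mixed residue types and of pole order $\ge 3$ quite transparent, at the cost of several analytic verifications (the Casorati--Weierstrass step showing a non-constant $\psi$ composed with an essential singularity cannot extend continuously, and the extension of $\Phi$ across $\infty$) that you state rather tersely but which do go through. The one place where you assert something that genuinely requires an argument is the final descent in the sufficiency direction: the equivariance of your developing map under the finite deck group is not automatic. It follows because $g^*\omega/\omega$ is a meromorphic first integral, hence constant $c_g$ by Lemma~\ref{L: non existence of first integral for turbulent foliation}, and because in case~(\ref{I:M}) comparing residues forces $c_g=\pm1$, so that $e^{h\circ g/\lambda}$ is a M\"obius transform of $e^{h/\lambda}$; you should either supply this or, as the paper does, sidestep it entirely by noting that $N_{\F}$ is effective and non-zero, so $h^0((N^*_{\F})^{\otimes 2})=0$ and Proposition~\ref{P:estruturas} gives uniqueness, hence Galois-invariance, of the structure.
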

\begin{proof}
    Assume first that $\F$ is transversely projective. After passing to a finite étale cover we can assume that the reference fibration has no multiple fibers (Corollary \ref{C: fibration in Kahler is transversely projective}) and that $\F$ is defined by a closed meromorphic $1$-form  $\omega$ without zeros (Lemma  \ref{L: turbulent foliation given by closed meromorphic 1-form}).  Let $C_1,\ldots, C_k$ the $\F$-invariant curves that appear in $(\omega)_{\infty}$. Each $C_i$ corresponds to a fiber $f^{-1}(p_i)$. Let $U \subset B$ be a simply connected open set of $B$, such that $p_i \in U$ for every $U$. Let $X_U = f^{-1}(U)$. By Ehresmann's Theorem,  $X_U$  is $C^{\infty}$-diffeomorphic to the product of an arbitrary fiber and $U$. In particular,  $\pi_1(X_U)$ is abelian. The  foliation $\restr{\F}{X_U}$  is still turbulent and transversely projective, and we are going to prove that $\restr{\omega}{X_U}$ is of type (\ref{I:A}) or (\ref{I:M}).

    Let $\mathcal{P} = (\pi:Y \rightarrow X_U, \G, \alpha: X_U \rightarrow Y)$ be a projective structure for $\restr{\F}{X_U}$, and $\rho: \pi_1(X_U) \rightarrow \mathrm{PSL}(2, \mathbb{C})$ be its  monodromy representation. Because $\pi_1(X_U)$ is abelian, the representation $\rho$, up to conjugation,  fits into one the following descriptions.
    \begin{enumerate}[label=(\roman*)]
        \item\label{I: multiplicative monodromy} The image of $\rho$ is contained in the subgroup  $(\mathbb{C}^*,\cdot) \subset \Aut(\C) \subset \Aut(\mathbb P^1)$. In this case, the foliation $\G$ is defined by a logarithmic 1-form with residues $\pm 1$ locally of the form  $\frac{dz}{z}$, where $z$ is a coordinate on $\mathbb P^1$.
        \item\label{I: additive monodromy}  The image of $\rho$ is contained $(\mathbb{C}, + ) \subset \Aut(\C) \subset \Aut(\mathbb P^1)$. In this case, $\G$ is given by a meromorphic 1-form without residues with is locally of the form $dz$.
        \item\label{I: involutions}  The image of $\rho$ has image contained in the subgroup of order four $\left\{\mathrm{id}, z\mapsto -z, z\mapsto \frac{1}{z}, z\mapsto -\frac{1}{z} \right\} \subset \Aut(\mathbb P^1)$. In this case, $\G$ has admits a global meromorphic first integral locally of the form $f(z) = (z - \frac{1}{z})^2$.
    \end{enumerate}
    In case \ref{I: involutions}, $\restr{\F}{X_U} = \alpha^*(\G)$ has meromorphic first integral contradicting Lemma \ref{L: non existence of first integral for turbulent foliation}.  For the cases  \ref{I: multiplicative monodromy} and \ref{I: additive monodromy}, let $\eta$ be the meromorphic 1-form defining $\G$. Because $\alpha^*(\eta)$ defines $\restr{\F}{X_U}$, there exists a meromorphic function $F \in \mathbb C(X_U)$  such that
    \[
        \alpha^*(\eta) = F \cdot \restr{\omega}{X_U}.
    \]
    Because $\eta$ and $\omega$ are both closed, then $F$ is first integral for $\restr{\F}{X_U}$. Therefore, $F$ is constant by Lemma \ref{L: non existence of first integral for turbulent foliation}. The transversality between $\alpha$ and $\G$ implies that the order of poles and residues of $\restr{\omega}{X_U}$ are the same as the ones of $\eta$. The result follows.

    Suppose now that there exists a finite étale covering $\pi : \Tilde{X} \to X$ such that $\Tilde \F = \pi^* \F$ is defined by a closed meromorphic 1-form $\omega$ of type (\ref{I:A}) or (\ref{I:M}). Without loss of generality, we can assume that $\pi$ is Galois with Galois group $G$.  In case (\ref{I:A}), local primitives $\int \omega$ form a collection of distinguished first integrals for $\Tilde \F$ showing that $\Tilde \F$ is transversely projective. Similarly, in case (\ref{I:M}), we consider the local first integrals defined by $\exp\left( \lambda^{-1} \int \omega \right)$ to produce a transversely projective structure for $\Tilde \F$.

    Since $\Tilde \F$ is defined by a closed meromorphic $1$-form without zeros and with non-empty polar divisor,  $N_{\Tilde \F} = \germe_{\Tilde{X}}((\omega)_{\infty})$ is effective and non-zero. Therefore, $H^0((N^*_{\Tilde \F})^{\otimes 2})=0$, and thus by Proposition \ref{P:estruturas}, the transversely projective structure of $\Tilde \F$ is unique. In particular, it is invariant by the Galois group $G$. Thus, the transversely projective structure descends to a transversely projective structure for $\F$. The theorem follows.
\end{proof}

The next example illustrates how the theorem above, applied in some special cases, provides examples of turbulent foliations that admit (and does not admit) transversely projective structures. 

\begin{example}
    Let $E$ be an elliptic curve, and let $X = E \times \Pj^1$ with projections $\pi_1: X \rightarrow E$ and $\pi_2: X \rightarrow \Pj^1$. The smooth turbulent foliations on $X$ are given by closed meromorphic 1-forms of the form $\pi_1^*(\omega_1)+\pi_2^*(\omega_2)$, where $\omega_1$ is a holomorphic non-vanishing 1-form on $E$, and $\omega_2$ is a meromorphic 1-form on $\Pj^1$ (see \cite[Theorem 1.2]{Ghys96Feuilletages}). Then, $\mathcal{F}$ is smooth transversely projective if and only if $\omega_2$ satisfies one the conditions of Theorem \ref{T: turbulent is transversely projective if and only if (A) or (M)}. For instance, considering a coordinate $z$ for $\Pj^1$, both $\omega_2 = dz$ and $\omega_2 = dz/z$ provide examples of transversely projective foliations, while $\omega_2 = dz/z^3$ provides an example of a turbulent foliation without transversely projective structure. More generally, one can verify that a generic smooth turbulent foliation in $X$ does not admit a transversely projective structure.
\end{example}

\section{Foliations on Hopf and Inoue surfaces}\label{S:Hopf}

\subsection{Foliations on Inoue surfaces}
An Inoue surface $X$ is a quotient of $\mathbb H \times \C$ by a (non-abelian) subgroup of $\Aut(\mathbb H \times \mathbb C) \subset \Aut(\mathbb C^2)$  generated by one diagonal linear map and three translations, see \cite[Chapter V, Section 19]{zbMATH02008523}. The foliations defined by the natural projections to $\mathbb H$ and $\mathbb C$ descend to foliations on $X$ and are the unique foliations on $X$. Both are clearly transversely projective (actually, they are transversely affine).

\subsection{Foliations on Hopf surfaces} \label{SubS: Linear Hopf}
A Hopf surface $X$ is a compact complex surface which has as universal covering $W = \mathbb C^2 -\{(0,0)\}$, see \cite[Chapter V, Section 18]{zbMATH02008523}. If $\pi_1(X)$ is isomorphic to $\mathbb Z$ then $X$ is called a primary Hopf surface, otherwise $X$ is a secondary Hopf surface. Any secondary Hopf surface admits a finite étale covering which is a primary Hopf surface. Primary Hopf surfaces in their turn, are $C^{\infty}$-diffeomorphic to $S^3 \times S^1$ and biholomorphic to the cyclic quotient of $W$ by the group generated by 
either $\varphi(x,y) = (\alpha x, \beta y)$  where $\alpha, \beta$ are complex numbers satisfying $0 < |\alpha| \le |\beta| < 1$ or 
 $\varphi(x,y) \mapsto (\alpha^n x + \tau \cdot y^n, \alpha y)$, where $\alpha$ is as before, $\tau \in \mathbb C^*$,  and $n \ge 2$ is an integer. 

\subsection{Linear primary Hopf surfaces} Assume that we are in the first case, that is $\varphi :W \to W$ is a linear contraction. If the multiplicative semi-group of $\mathbb C^*$ generated by $\alpha$ and $\beta$ has rank two then the corresponding primary Hopf surface is of algebraic dimension zero and the foliations on it are quotients of the foliations defined by the $1$-forms $\lambda \frac{dx}{x} + \mu \frac{dy}{y}$ where $\lambda$ and $\mu$ are complex numbers, not both zero. If instead the multiplicative semi-group of $\mathbb C^*$ generated by $\alpha$ and $\beta$ has rank one, i.e. there exists relatively prime positive integers $n,m >0$ such that $\alpha^n = \beta^m$ then the algebraic dimension of $X$ is equal to one. Furthermore, the meromorphic function $x^n/y^m$ descends to a meromorphic function $f$ on $X$ and is such that $\mathbb C(X) = \mathbb C(f)$. Notice that $f$ has no indeterminacy points and defines a morphism from $X \to \mathbb P^1$ with elliptic fibers. Note that the orbifold structure on $\mathbb P^1$ is good only when $n=m=1$. In any case, every other foliation $\F$ on the primary Hopf surface $X$ is turbulent with respect to this fibration. Moreover, they are all defined by a closed meromorphic $1$-form $\omega$. We point out that $\F$ has at least one invariant fiber and hence, by Lemma \ref{L: non existence of first integral for turbulent foliation}, it does not have a  meromorphic first integrals. Alternatively, this also follows from the fact the algebraic dimension of $X$ is equal to one. As a consequence, we obtain that the $1$-form $\omega$ is, up to multiplication by a non-zero constant, the unique closed meromorphic $1$-form defining $\F$.

\subsection{Non-linear primary Hopf surfaces} We now discuss the second possibility for $\varphi$, that is $\varphi$ is non-linear and of the form $\varphi(x,y) \mapsto (\alpha^n x + \tau \cdot y^n, \alpha y)$. In  this case, the primary Hopf surface $X$ has algebraic dimension zero and  the foliations on $X$ are obtained as quotients of the foliations on $W$ defined by the meromorphic $1$-forms
\[
     \lambda \frac{dy}{y}   + \mu d \left( \frac{x}{y^n} \right) \, ,
\]
for $\lambda, \mu \in \mathbb C$. Notice that they are all closed and invariant by $\varphi$, thus they descend to closed meromorphic $1$-forms
on $X$. 

\subsection{Synthesis}
Having the description of the foliations on primary Hopf surfaces at hand, we can argue as in the proof of Theorem \ref{T: turbulent is transversely projective if and only if (A) or (M)} to obtain the following result.

\begin{prop}\label{P: foliations on hopf surface with transversely projective structure}
    Let $\F$ be a foliation on a Hopf surface. Assume that the general leaf of $\F$ is not compact.  Then $\F$ is transversely projective if, and only if, after a finite étale covering, $\F$ is given by a closed meromorphic 1-form $\omega$ of one of the following types
    \begin{enumerate}
        \item $\omega$ is without residues and $(\omega)_{\infty} = 2 \sum C_i$, where $C_i$ are irreducible smooth curves, $C_i \cap C_j =\emptyset, \forall i\neq j$; or
        \item $\omega$ is logarithmic with residues of the form $\pm \lambda$, for some $\lambda \in \C^*$.
    \end{enumerate}
\end{prop}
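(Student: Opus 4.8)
The plan is to repeat, almost verbatim, the proof of Theorem~\ref{T: turbulent is transversely projective if and only if (A) or (M)}, using in place of the reference fibration and the lemmas attached to it the explicit description of the foliations on a primary Hopf surface recalled above. First I would reduce to the case where $X$ is a primary Hopf surface (a secondary one is finitely étale covered by a primary one), so that $\pi_1(X)\simeq\Z$. By the preceding subsections, $\F$ is then defined by one of the displayed closed meromorphic $1$-forms $\omega$ — $\lambda\frac{dx}{x}+\mu\frac{dy}{y}$ in the linear case, $\lambda\frac{dy}{y}+\mu\, d(x/y^n)$ in the non-linear one — which has no zeros and whose polar divisor is nonempty with $\F$-invariant (elliptic) components. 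I would also record that $\F$ has no non-constant meromorphic first integral: by the argument of Lemma~\ref{L: non existence of first integral for turbulent foliation}, the smoothness of $\F$ would make such an integral a morphism $X\to\Pj^1$ whose general fibres are compact $\F$-invariant curves, hence $\F$-leaves, contradicting the hypothesis that the general leaf is not compact.

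For the forward implication, let $\mathcal P=(\pi\colon P\to X,\G,\sigma)$ be a transversely projective structure for $\F$ with monodromy $\rho\colon\pi_1(X)\to\mathrm{PSL}(2,\C)$. Since $\pi_1(X)\simeq\Z$ is cyclic, $\rho(\pi_1(X))$ is cyclic, hence — contrary to case~\ref{I: involutions} in the proof of Theorem~\ref{T: turbulent is transversely projective if and only if (A) or (M)} — up to conjugacy it lies in $(\C^*,\cdot)$ or in $(\C,+)$; accordingly $\G$ is defined by a closed meromorphic $1$-form $\eta$ which is either logarithmic with residues $\pm 1$ along two $\G$-invariant sections of $\pi$, or has no residues and poles of order exactly two along one $\G$-invariant section. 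Exactly as in the turbulent case, $\sigma$ meets these $\G$-invariant divisors transversally and, writing $\sigma^*\eta=F\cdot\omega$ with $F$ meromorphic, the closedness of $\eta$ and $\omega$ gives $dF\wedge\omega=0$, so that $F$ is a first integral of $\F$, hence constant; therefore $\omega$ has, up to a nonzero scalar, the same pole--residue structure as $\eta$, i.e. $\omega$ is of type~(\ref{I:A}) or of type~(\ref{I:M}). Comparing with the explicit shape of $\omega$ then discards type~(\ref{I:A}) — the forms $\lambda\frac{dx}{x}+\mu\frac{dy}{y}$ are logarithmic, and $\lambda\frac{dy}{y}+\mu\, d(x/y^n)$ has a pole of order $n+1\ge 3$ whenever $\mu\neq 0$ — and forces $\mu=\pm\lambda$ (or one residue to vanish) in the linear case and $\mu=0$ in the non-linear case; in both cases $\omega$ is of type~(\ref{I:M}).

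The converse is verbatim the corresponding part of the proof of Theorem~\ref{T: turbulent is transversely projective if and only if (A) or (M)}: after a finite étale Galois covering $p\colon Y\to X$ with group $G$ on which $\tilde\F=p^*\F$ is defined by a closed meromorphic $1$-form $\omega$ of type~(\ref{I:A}) or~(\ref{I:M}), the local primitives $\int\omega$, respectively $\exp(\lambda^{-1}\!\int\omega)$, are $\Pj^1$-valued submersions — the hypotheses on $(\omega)_\infty$ ensuring that they extend as submersions across the polar locus — and define a regular transversely projective structure for $\tilde\F$; since $N_{\tilde\F}=\germe_Y((\omega)_\infty)$ is effective and nonzero, $H^0((N^*_{\tilde\F})^{\otimes 2})=0$, so this structure is unique by Proposition~\ref{P:estruturas}, hence $G$-invariant, hence descends to a transversely projective structure for $\F$.

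The step I expect to require the most care is the analysis of $\eta$ in the forward implication — deducing, from the transversality of $\sigma$ and $\G$, that $\omega$ inherits the pole orders ($\le 2$) and the residues of $\eta$ — together with the appeal to the precise classification of foliations on primary Hopf surfaces (in particular to the fact that the only $\varphi$-invariant closed meromorphic $1$-forms on $\C^2$ are the ones displayed above), which is exactly what lets one rule out type~(\ref{I:A}) and pin down the residues in type~(\ref{I:M}).
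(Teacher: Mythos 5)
Your proposal is correct and takes essentially the same route as the paper, whose entire proof consists of the remark that one should ``argue as in the proof of Theorem \ref{T: turbulent is transversely projective if and only if (A) or (M)}'' using the explicit closed $1$-forms on primary Hopf surfaces; you carry out exactly that reduction, including the correct replacements for the turbulent-case ingredients (cyclicity of $\pi_1$ of a primary Hopf surface in place of the local abelianness argument, and non-compactness of the general leaf in place of Lemma \ref{L: non existence of first integral for turbulent foliation} to exclude first integrals). The additional observations you make (discarding type (1) by inspecting the explicit forms, and pinning down the residues) go slightly beyond what the statement requires but are consistent with it.
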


We provide an application of the preceding proposition, exhibiting cases of turbulent foliations that either possess or lack a transversely projective structure, depending on the choice of the defining 1-form.

\begin{example}
    Considering the description of the non-linear primary Hopf surfaces, Proposition \ref{P: foliations on hopf surface with transversely projective structure} implies that a foliation $\F$ is transversely projective if, and only if, $\lambda=0$ and $n=1$.
\end{example}

\providecommand{\bysame}{\leavevmode\hbox to3em{\hrulefill}\thinspace}
\providecommand{\MR}{\relax\ifhmode\unskip\space\fi MR }
\providecommand{\MRhref}[2]{%
  \href{http://www.ams.org/mathscinet-getitem?mr=#1}{#2}
}
\providecommand{\href}[2]{#2}

\end{document}